\newcommand{\ZZ}{{\mathbb Z}}
\newcommand{\NN}{{\mathbb N}}
\newcommand{\QQ}{{\mathbb Q}}
\newcommand{\calO}{{\mathcal O}}
\newcommand{\calE}{{\mathcal E}}
\newcommand{\calF}{{\mathcal F}}
\DeclareMathOperator{\Spec}{Spec}
\newcommand{\FF}{\mathbb{F}}
\newcommand{\Fq}{\mathbb{F}_q}
\newcommand{\Aut}{\textrm{Aut}}
\newcommand{\Hom}{\textrm{Hom}}
\newcommand{\Zp}{\mathbb{Z}_p}
\newcommand{\Zl}{\mathbb{Z}_l}
\newcommand{\Qp}{\mathbb{Q}_p}
\newcommand{\Qpbar}{\overline{\mathbb Q}_p}
\newcommand{\Ql}{\mathbb{Q}_l}
\newcommand{\Qlbar}{\overline{\mathbb{Q}}_l}
\newcommand{\Qlambar}{\overline{\mathbb Q}_{\lambda}}
\newcommand{\fisoc}[1]{\textbf{F-Isoc}(#1)}
\newcommand{\fisocd}[1]{\textbf{F-Isoc}^{\dagger}(#1)}
\newcommand{\fc}[1]{\textbf{FC}(#1)}
\numberwithin{equation}{section}
\numberwithin{figure}{section}
\theoremstyle{plain}
\newtheorem{thm}{\protect\theoremname}[section]
\theoremstyle{empty}
\theoremstyle{plain}
\theoremstyle{remark}
\newtheorem{rem}[thm]{\protect\remarkname}
\theoremstyle{plain}
\newtheorem*{thm*}{\protect\theoremname}
\theoremstyle{plain}
\newtheorem*{cor*}{\protect\corollaryname}
\theoremstyle{remark}
\newtheorem*{acknowledgement*}{\protect\acknowledgementname}
\theoremstyle{definition}
\newtheorem{defn}[thm]{\protect\definitionname}
\theoremstyle{plain}
\theoremstyle{remark}
\theoremstyle{definition}
\theoremstyle{plain}
\newtheorem{cor}[thm]{\protect\corollaryname}
\theoremstyle{remark}
\newtheorem*{rem*}{\protect\remarkname}
\theoremstyle{plain}
\newtheorem{lem}[thm]{\protect\lemmaname}
\newlist{casenv}{enumerate}{4}
\setlist[casenv]{leftmargin=*,align=left,widest={iiii}}
\setlist[casenv,1]{label={{\itshape\ \casename} \arabic*.},ref=\arabic*}
\setlist[casenv,2]{label={{\itshape\ \casename} \roman*.},ref=\roman*}
\setlist[casenv,3]{label={{\itshape\ \casename\ \alph*.}},ref=\alph*}
\setlist[casenv,4]{label={{\itshape\ \casename} \arabic*.},ref=\arabic*}
\theoremstyle{plain}
\newtheorem{prop}[thm]{\protect\propositionname}
\theoremstyle{plain}
\theoremstyle{definition}
\newtheorem{setup}[thm]{Setup}
\providecommand{\acknowledgementname}{Acknowledgement}
\providecommand{\casename}{Case}
\providecommand{\conjecturename}{Conjecture}
\providecommand{\corollaryname}{Corollary}
\providecommand{\definitionname}{Definition}
\providecommand{\examplename}{Example}
\providecommand{\factname}{Fact}
\providecommand{\lemmaname}{Lemma}
\providecommand{\notationname}{Notation}
\providecommand{\propositionname}{Proposition}
\providecommand{\questionname}{Question}
\providecommand{\remarkname}{Remark}
\providecommand{\theoremname}{Theorem}
\begin{document}

\title{Rank 2 Local Systems and Abelian Varieties II}

\author{Raju Krishnamoorthy}
\email{krishnamoorthy@alum.mit.edu }  
\address{Bergische Universit\"at Wuppertal,
F13.05,
Gau{\ss}stra{\ss}e 20, Wuppertal}
\author{Ambrus P\'al}
\email{a.pal@imperial.ac.uk}
\address{Department of Mathematics,
180 Queens Gate, Imperial College, London, SW7 2AZ,
United Kingdom}
\subjclass[2020]{11G10, 14D10, 14F30, 14G35}

\begin{abstract}
Let $X/\mathbb{F}_{q}$ be a smooth, geometrically connected, quasi-projective scheme. Let $\mathcal{E}$ be a semisimple overconvergent $F$-isocrystal on $X$. Suppose that irreducible summands $\mathcal{E}_i$ of $\mathcal E$ have rank 2, determinant $\bar{\mathbb{Q}}_p(-1)$, and infinite monodromy at $\infty$. Suppose further that for each closed point $x$ of $X$, the characteristic polynomial of $\mathcal{E}$ at $x$ is in $\mathbb{Q}[t]\subset \mathbb Q_p[t]$. Then there exists a dense open subset $U\subset X$ such that $\mathcal{E}|_U$ comes from a family of abelian varieties on $U$.

As an application, let $L_1$ be an irreducible lisse $\bar{\mathbb{Q}}_l$ sheaf on $X$ that has rank 2, determinant $\bar{\mathbb{Q}}_l(-1)$, and infinite monodromy at $\infty$. Then all crystalline companions to $L_1$ exist (as predicted by Deligne's crystalline companions conjecture) if and only if there exists a dense open subset $U\subset X$ and an abelian scheme $\pi_U\colon A_U\rightarrow U$ such that $L_1|_U$ is a summand of $R^1(\pi_U)_*\bar{\mathbb{Q}}_l$.

\end{abstract}

\maketitle


\section{Introduction}
Throughout this article, $p$ is a prime number and $q$ is a power of $p$. If $X/k$ is a smooth scheme over a perfect field of characteristic $p$, then $\fisocd{X}$ denotes the category of overconvergent $F$-isocrystals on $X$ and $\fisocd{X}_{\Qpbar}$ denotes its $\Qpbar$-linearization. Overconvergent $F$-isocrystals are a $p$-adic analog of lisse $l$-adic sheaves.
\begin{defn}Let $X/k$ be a smooth, geometrically connected scheme over a perfect field $k$ of characteristic $p$ and let $\calE\in\fisocd{X}_{\Qpbar}$. We say that $\calE$ has \emph{infinite local monodromy at infinity} if for every triple $(X',\overline{X'},f)$ where $\overline{X'}$ is smooth projective over $k$, $X'\subset \overline{X'}$ is a dense Zariski open subset, and $f\colon X'\rightarrow X$ is an alteration, the overconvergent $F$-isocrystal $f^*\calE$ does not extend to an $F$-isocrystal on $\overline{X'}$.
\end{defn}
This definition of infinite local monodromy at infinity applies equally well to lisse $\Qlbar$-sheaves and is compatible with the other notions of infinite local monodromy at infinity.
\begin{thm}\label{main_thm}Let $X/\Fq$ be a smooth, geometrically connected, quasi-projective scheme. Let $\calE\in \fisocd{X}$ be a semisimple overconvergent $F$-isocrystal. Suppose:
\begin{itemize}
\item for every closed point $x$ of $X$, the polynomial $P_x(\calE,t)$ has coefficients in $\QQ\subset \Qp$;
\item every irreducible summand $\calE_i\in \fisocd{X}_{\Qpbar}$ of $\calE$ has rank 2, determinant $\Qpbar(-1)$, and infinite local monodromy around infinity.
\end{itemize}
Then $\calE$ comes from a family of abelian varieties. More precisely, there exists a non-empty open subset $U\subset X$ and an abelian scheme $A_U\rightarrow U$, so that $\mathbb D (A_U[p^{\infty}])\otimes \Qpbar\cong \mathcal E|_U$.
\end{thm}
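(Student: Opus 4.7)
The plan is to bootstrap from the $\ell$-adic main theorem of Part I by constructing $\ell$-adic companions and transferring the resulting abelian schemes back to the crystalline side. First I would reduce to a convenient irreducible setup: decompose $\mathcal{E}\otimes\Qpbar=\bigoplus_i \mathcal{E}_i$ into irreducibles in $\fisocd{X}_{\Qpbar}$, and note that the hypothesis $P_x(\mathcal{E},t)\in\QQ[t]$ forces the $\mathcal{E}_i$ to be permuted by $\Gal(\Qpbar/\QQ)$. It then suffices to treat each Galois orbit $O_j$ of irreducibles separately, with the multiplicity $m_j$ it occurs with in $\mathcal{E}$, since the desired abelian scheme can be built by taking products at the end.

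Next I would produce $\ell$-adic companions. For each prime $\ell\neq p$ and each $\mathcal{E}_i$, Abe's theorem on curves together with the work of Kedlaya and Abe-Esnault on companions over smooth quasiprojective varieties yields a lisse $\Qlbar$-sheaf $L_i$ on $X$ companion to $\mathcal{E}_i$. Rank and determinant are preserved because they are read off from characteristic polynomials at closed points, so $L_i$ has rank $2$ and determinant $\Qlbar(-1)$. Preservation of infinite local monodromy at infinity is a nontrivial but checkable point: one restricts along curves approaching each boundary component and matches local slope and ramification invariants on the two sides using the global trace data, exploiting that both notions of "infinite monodromy at infinity" are encoded in the behavior of Frobenius eigenvalues at closed points.

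At this point Part I applies to each $L_i$, producing an open $U_i\subset X$ and an abelian scheme $\pi_i\colon A_i\to U_i$ with $L_i|_{U_i}$ a $\Qlbar$-summand of $R^1(\pi_i)_*\Qlbar$. Compatibility of $\ell$-adic and crystalline cohomology for abelian schemes, combined with the uniqueness of companions via Chebotarev density over the closed points of $U_i$, shows that $\mathbb{D}(A_i[p^\infty])\otimes\Qpbar$ contains $\mathcal{E}_i|_{U_i}$ as a summand. Applying Poincar\'e complete reducibility in the category of abelian schemes up to isogeny over $U_i$, I would replace $A_i$ by its minimal isogeny factor $B_j$ whose first $\ell$-adic cohomology is exactly the full Galois orbit $\bigoplus_{i'\in O_j}L_{i'}$; by compatibility of realizations this then forces $\mathbb{D}(B_j[p^\infty])\otimes\Qpbar\cong\bigoplus_{i\in O_j}\mathcal{E}_i|_{U_i}$. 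Taking $U=\bigcap_j U_j$ and $A_U=\prod_j B_j^{m_j}|_U$ yields the claimed abelian scheme with $\mathbb{D}(A_U[p^\infty])\otimes\Qpbar\cong\mathcal{E}|_U$.

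The principal obstacle is the isogeny-factor extraction in the third step: Part I gives only a summand statement, and to conclude an isomorphism one must isolate the correct simple factor $B_j$ so that its Dieudonn\'e isocrystal matches the Galois orbit exactly, with no extraneous summands and the right multiplicity. This will require leaning heavily on the determinant hypothesis $\det\mathcal{E}_i=\Qpbar(-1)$ (which rigidifies the isogeny class by ruling out unwanted self-dual twists), combined with careful Poincar\'e decomposition bookkeeping. A subsidiary but delicate point is the transfer of infinite local monodromy at infinity through the $p$-to-$\ell$ companion correspondence, which likely needs comparison results between $p$-adic and $\ell$-adic local monodromy at the boundary established either in Part I or in the preceding sections of this paper.
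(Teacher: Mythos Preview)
Your proposal has a genuine gap at the third step: the appeal to ``the $\ell$-adic main theorem of Part I'' to produce, for each companion $L_i$, an abelian scheme over an open subset of $X$ with $L_i$ a summand of its first $\ell$-adic cohomology. That higher-dimensional $\ell$-adic statement is not available as an input here; it is precisely (one direction of) Corollary~\ref{cor:higher_dimension_drinfeld} of \emph{this} paper, and it is deduced there as a consequence of Theorem~\ref{main_thm}, not the other way around. The introduction is explicit on this point: Corollaries~\ref{cor:higher_dimension_drinfeld} and~\ref{cor:higher_dimensional_padic_drinfeld} are described as providing ``evidence'' for \cite[Conjecture 1.2]{krishnamoorthypal2018}, not as following from it, and the present argument is advertised as independent of the Serre--Tate/algebraization techniques of Part~I. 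So as written your plan is circular.

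The paper's proof is structurally quite different and does not transfer from a pre-existing higher-dimensional $\ell$-adic result; it builds directly from the curve case (Drinfeld's Theorem~\ref{Theorem:GL2}). After Kedlaya's semistable reduction one may assume $\calE$ extends to a logarithmic $F$-isocrystal with nilpotent residues on a smooth compactification $\bar X$. A Katz-type argument (Lemma~\ref{lem:katz}) produces a logarithmic Dieudonn\'e crystal on an open $\bar U\subset\bar X$ of complementary codimension $\ge 2$, and hence a Hodge line bundle $\omega$ on all of $\bar X$. Drinfeld's theorem is then run along a space-filling family of good curves $\bar C_n\subset\bar U$ (supplied by Poonen's Bertini theorem, Lemma~\ref{lem:good_curves_exist}); after Zarhin's trick each resulting principally polarized abelian scheme gives a morphism $\bar C_n\to\mathscr{A}^*_{8g,1,l}\subset\PP^m$ defined by sections of $\omega^r|_{\bar C_n}$. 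The crux is that $H^0(\bar X,\omega^r)$ is a \emph{finite} set (we are over $\Fq$), so by pigeonhole (Lemma~\ref{lem:map_on_curves}) infinitely many of the curve-wise maps glue to a rational map $\bar X\dashrightarrow\mathscr{A}^*_{8g,1,l}$, yielding an abelian scheme $B_U\to U$. Zarhin's Tate isogeny theorem, applied via a single space-filling curve whose $\pi_1$-image equals that of $U$ (\`a la \cite{katz2001spacefilling}), then splits off the correct factor $A_U$. The Hodge line bundle and the finiteness of its global sections over $\Fq$ are the engine that lifts the curve case to higher dimension; this is the idea your proposal is missing.
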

Here, if $G\rightarrow X$ is a $p$-divisible group, $\mathbb{D}(G)$ is the (contravariant) Dieudonn\'e crystal attached to $G$. We have the following applications. Deligne formulated what is now called the companions conjecture in \cite[Conjecture 1.2.10 (vi)]{deligne1980conjecture}. For a guide to the crystalline companions conjecture, see \cite{kedlaya2016notes, kedlayacompanions}.
\begin{cor}\label{cor:higher_dimension_drinfeld}Let $X/\Fq$ be a smooth, geometrically connected, quasi-projective scheme. Let $L_1$ be an irreducible rank 2 lisse $\Qlbar$ sheaf on $X$ with infinite monodromy around infinity and determinant $\Qlbar(-1)$. Then the following are equivalent:
\begin{enumerate}
\item there exists a non-empty open subset $U\subset X$ and an abelian scheme $\pi\colon A_U\rightarrow U$ such that $L_1|_U$ is a summand of $R^1(\pi_U)_*\Qlbar$; 
\item all crystalline companions to $L_1$ exist (as predicted by Deligne's crystalline companions conjecture).
\end{enumerate}
\end{cor}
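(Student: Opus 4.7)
My plan is to prove both directions by reducing to Theorem \ref{main_thm}, with auxiliary input from the companions formalism for abelian schemes (Katz--Messing) and the crystalline companions literature (Abe, Kedlaya).

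For the direction $(2)\Rightarrow(1)$: let $\calE_1\in\fisocd{X}_{\Qpbar}$ be a crystalline companion to $L_1$. By Weil II the Frobenius traces of $L_1$ generate a number field $K\subset\Qbar$, and the chosen embedding $\Qbar\hookrightarrow\Qpbar$ identifies $K$ with a subfield of $\Qpbar$. I would form
\[
\calE := \bigoplus_{\sigma\colon K\hookrightarrow \Qpbar}\sigma^{*}\calE_1,
\]
the sum running over all $\Q$-algebra embeddings. This finite sum is permuted by $\Gal(\Qpbar/\Qp)$, so it descends to an object of $\fisocd{X}$. Its characteristic polynomials $P_x(\calE,t)=\prod_\sigma \sigma(P_x(\calE_1,t))$ lie in $\Q[t]$. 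Each irreducible summand $\sigma^{*}\calE_1$ retains rank $2$, determinant $\Qpbar(-1)$ (which is $\Q$-rational and hence $\sigma$-fixed), and infinite local monodromy at infinity (a geometric condition unaffected by scalar automorphism). Thus $\calE$ satisfies the hypotheses of Theorem \ref{main_thm}, producing an open $U\subset X$ and an abelian scheme $\pi\colon A_U\to U$ with $\mathbb D(A_U[p^\infty])\otimes\Qpbar\cong\calE|_U$. By Katz--Messing, $R^1\pi_{*}\Qlbar$ is the $l$-adic companion of $\mathbb D(A_U[p^\infty])\otimes\Qpbar$, and matching irreducible summands via Chebotarev density identifies the summand corresponding to $\calE_1|_U$ with $L_1|_U$, giving $(1)$.

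For the direction $(1)\Rightarrow(2)$: given $\pi\colon A_U\to U$ with $L_1|_U$ a summand of $R^1\pi_{*}\Qlbar$, the Dieudonn\'e $F$-isocrystal $\mathbb D(A_U[p^\infty])\otimes\Qpbar$ is an overconvergent $F$-isocrystal on $U$ (overconvergence for abelian schemes being classical) and a crystalline companion to $R^1\pi_{*}\Qlbar$. Extracting the irreducible summand whose Frobenius eigenvalues match those of $L_1|_U$ yields a crystalline companion $\calE_{1,U}$ to $L_1|_U$ on $U$. The remaining task is to extend $\calE_{1,U}$ across $X\setminus U$. I would accomplish this by applying Abe's crystalline companions theorem to curves meeting $X\setminus U$ (producing companions compatible with $L_1$ on those curves) and then gluing via Kedlaya's extension results for overconvergent $F$-isocrystals, using that the $l$-adic companion $L_1$ is already globally defined on $X$.

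The principal obstacle I anticipate is this last extension step in $(1)\Rightarrow(2)$: constructing a companion on $U$ is essentially formal once the abelian scheme is in hand, but extending an overconvergent $F$-isocrystal across a boundary divisor while preserving the overconvergent structure is delicate and requires substantial input beyond the curve case established by Abe. By contrast, $(2)\Rightarrow(1)$ is nearly immediate once Theorem \ref{main_thm} is granted, modulo the routine Galois-theoretic construction of $\calE$ with $\Q$-rational traces and the standard Chebotarev matching of companion summands on the abelian-scheme side.
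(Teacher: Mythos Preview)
Your approach matches the paper's in both directions, but there are two points where your argument is imprecise and the paper's treatment is cleaner.

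In $(2)\Rightarrow(1)$, your claim that $\bigoplus_{\sigma}\sigma^{*}\calE_{1}$ ``is permuted by $\Gal(\Qpbar/\Qp)$, so it descends to an object of $\fisocd{X}$'' is not justified: Galois-stability of the isomorphism class does not by itself guarantee effective descent (Brauer-type obstructions can occur). The paper sidesteps this entirely. It takes the full list $(\calE_{i})_{i=1}^{b}$ of crystalline companions, chooses a single $p$-adic local field $K$ over which each $\calE_{i}$ lives, forms $\bigoplus_{i}\calE_{i}$ in $\fisocd{X}_{K}$, and then \emph{restricts scalars} from $K$ to $\Qp$ to obtain an honest object of $\fisocd{X}$ of rank $2b[K:\Qp]$. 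This object may be larger than your sum (each companion may appear with multiplicity), but it visibly has $\QQ$-rational characteristic polynomials and satisfies the hypotheses of Theorem~\ref{main_thm}; no descent argument is needed. The remaining steps (Katz--Messing, Zarhin semi-simplicity, matching summands) are as you describe.

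In $(1)\Rightarrow(2)$, you correctly identified the extension from $U$ to $X$ as the crux, but your proposed mechanism (Abe on curves plus ad hoc gluing) is vague. The paper invokes a single black-box: Kedlaya's \cite[Corollary~3.3.3]{kedlayacompanions}, which says precisely that existence of a crystalline companion on a dense open $U$ propagates to all of $X$. Before that, the paper uses \cite{pal2015monodromy} for semi-simplicity of $\mathbb{D}(A_{U}[p^{\infty}])\otimes\Qp$ (needed to extract irreducible summands) and Lemma~\ref{lem:all_companions} to see that \emph{all} $p$-adic companions of $L_{1}|_{U}$, not just one, occur as summands of $\mathbb{D}(A_{U}[p^{\infty}])\otimes\Qpbar$. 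You should cite semi-simplicity explicitly rather than treating the summand extraction as automatic.
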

\begin{cor}\label{cor:higher_dimensional_padic_drinfeld}
Let $X/\Fq$ be a smooth, geometrically connected, quasi-projective scheme. Let $\calE_1$ be an irreducible rank 2 object of $\fisocd{X}_{\Qpbar}$ with infinite monodromy around infinity and determinant $\Qpbar(-1)$. Suppose the (number) field $E_1\subset \Qpbar$ generated by the coefficients of $P_x(\calE_1,t)$ as $x$ ranges through the closed points of $X$ has a single prime over $p$. Then $\calE_1$ comes from a family of abelian varieties: there exists a non-empty open subset $U\subset X$ and an abelian scheme $A_U\rightarrow U$ such that $\calE_1|_U$ is a summand of $\mathbb{D}(A_U[p^{\infty}])\otimes\Qpbar$.
\end{cor}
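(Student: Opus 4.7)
The plan is to reduce to Theorem~\ref{main_thm} by constructing a semisimple $\calE \in \fisocd{X}$ whose $\Qpbar$-linearization is the direct sum of all Galois conjugates of $\calE_1$. Pick a finite Galois extension $L/\QQ_p$ inside $\Qpbar$ over which $\calE_1$ admits a model $\calE_1^L \in \fisocd{X}_L$; note that $E_1 \subset L$ automatically, since the coefficients of $P_x(\calE_1^L,t) = P_x(\calE_1,t)$ lie in $L$ and generate $E_1$. Set
\[
\calE \;:=\; \mathrm{Res}_{L/\QQ_p}\bigl(\calE_1^L\bigr) \;\in\; \fisocd{X}.
\]
Using the standard decomposition $L \otimes_{\QQ_p} \Qpbar \cong \prod_{\sigma: L \hookrightarrow \Qpbar} \Qpbar$, one obtains $\calE \otimes_{\QQ_p} \Qpbar \cong \bigoplus_{\sigma} (\calE_1^L)^\sigma$, a sum of irreducible rank-$2$ Galois conjugates. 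In particular $\calE_1$ itself appears as the summand indexed by the given embedding $L \hookrightarrow \Qpbar$.

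Next I would verify the hypotheses of Theorem~\ref{main_thm} for $\calE$. Each summand $(\calE_1^L)^\sigma$ has rank $2$ and determinant $\Qpbar(-1)$, since $\Qpbar(-1)$ (with Frobenius $q \in \QQ_p$) is $\Gal(\Qpbar/\QQ_p)$-invariant. Each also retains infinite local monodromy around infinity: the defining condition is phrased in terms of alterations $f\colon X'\to X$ and extendability of $f^*\calE$ across the boundary, and if some $f^*(\calE_1^L)^\sigma$ extended, one could conjugate back by $\sigma^{-1}$ to obtain a forbidden extension of $f^*\calE_1$. For characteristic polynomials, restriction of scalars yields
\[
P_x(\calE, t) \;=\; \prod_{\sigma: L \hookrightarrow \Qpbar} \sigma\bigl(P_x(\calE_1^L, t)\bigr),
\]
with coefficients in $E_1 \subset L$. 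The single-prime hypothesis implies that the $\Gal(\Qpbar/\QQ_p)$-orbit of the inclusion $E_1 \hookrightarrow \Qpbar$ is all of $\Hom_\QQ(E_1, \Qpbar) = \Hom_\QQ(E_1, \bar\QQ)$, so the above product is a positive integer power of $\prod_{\tau: E_1 \hookrightarrow \bar\QQ} \tau\bigl(P_x(\calE_1, t)\bigr)$. This latter product is $\Gal(\bar\QQ/\QQ)$-invariant and hence lies in $\QQ[t]$; it follows that $P_x(\calE, t) \in \QQ[t]$.

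Applying Theorem~\ref{main_thm} to $\calE$ produces a non-empty open $U \subset X$ and an abelian scheme $A_U \to U$ with $\mathbb{D}(A_U[p^\infty]) \otimes \Qpbar \cong \calE|_U$; since $\calE_1$ is one of the $\Qpbar$-summands of $\calE \otimes \Qpbar$, it is a summand of $\mathbb{D}(A_U[p^\infty]) \otimes \Qpbar$, as required. The main obstacle is the restriction-of-scalars and Galois-descent bookkeeping, together with securing the rationality statement $P_x(\calE, t) \in \QQ[t]$ (rather than the \emph{a priori} weaker $\QQ_p[t]$), which is where the single-prime hypothesis is essential: without it, the $\Gal(\Qpbar/\QQ_p)$-orbit of $E_1 \hookrightarrow \Qpbar$ would miss some embeddings and the product would only land in $\QQ_p \cap \bar\QQ$. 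Once $\calE$ is in place, Theorem~\ref{main_thm} is applied as a black box.
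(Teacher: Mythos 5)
Your proposal is correct, and it takes a more direct route than the paper does. The paper's proof of this corollary is very short: it cites an earlier paper to conclude that all $p$-adic companions of $\calE_1$ exist (under the single-prime hypothesis they are all Galois twists of each other), constructs the $l$-adic companion $L_1$ via Abe or Kedlaya, and then invokes Corollary~\ref{cor:higher_dimension_drinfeld}. Unwinding that chain, the proof of Corollary~\ref{cor:higher_dimension_drinfeld} (direction (2)$\Rightarrow$(1)) itself performs a restriction of scalars $K\to\Qp$ on the sum of all crystalline companions and then applies Theorem~\ref{main_thm} --- so the entire argument does eventually pass through exactly the construction you carry out. What you do differently is to short-circuit this: you build $\calE=\mathrm{Res}_{L/\QQ_p}(\calE_1^L)$ up front, observe that its $\Qpbar$-summands are precisely the Galois conjugates of $\calE_1$ (which, thanks to the single-prime hypothesis, sweep out all of $\Hom_{\QQ}(E_1,\bar\QQ)$ and thereby force $P_x(\calE,t)\in\QQ[t]$), and feed $\calE$ directly into Theorem~\ref{main_thm}. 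This avoids any detour through the $l$-adic side and avoids the external reference for existence of $p$-adic companions, since the restriction-of-scalars construction furnishes them explicitly. The trade-off is that you must carry out the Galois-orbit bookkeeping yourself (which you do correctly: the product over $\sigma\in\Gal(L/\QQ_p)$ is the $[L:E_1\cdot\QQ_p]$-th power of the $\Gal(\bar\QQ/\QQ)$-invariant product over $\Hom_\QQ(E_1,\bar\QQ)$), whereas the paper outsources this to a lemma. The verification that each conjugate $(\calE_1^L)^\sigma$ retains rank $2$, determinant $\Qpbar(-1)$ (via Chebotarev for rank-$1$ objects), and infinite local monodromy at infinity is also sound. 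Both routes are valid; yours is the more self-contained.
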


In particular, Corollaries \ref{cor:higher_dimension_drinfeld} and \ref{cor:higher_dimensional_padic_drinfeld} provide some evidence for a question of Drinfeld \cite[Question 1.4]{drinfeld2012conjecture} and a conjecture of the authors \cite[Conjecture 1.2]{krishnamoorthypal2018}. Our motivation for formulating this conjecture was a celebrated theorem of Corlette-Simpson over $\mathbb{C}$ \cite[Theorem 11.2]{corlette2008classification}, the proof of which uses non-abelian Hodge theory. In contrast to our earlier work \cite{krishnamoorthypal2018}, this article does not use Serre-Tate deformation theory nor does it use the algebraization/globalization techniques of \cite{hartshorneample}.

We briefly sketch the proof. Drinfeld's first work on the Langlands correspondence for $GL_2$, together with Abe's work on the $p$-adic Langlands correspondence and Lemma \ref{lem:all_companions}, implies Theorem \ref{main_thm} when $\dim(X)=1$. (The precise argument is given in Step 2 and also uses  Remark \ref{rem:mult_1} to organize the summands, as explained in Step 1 of the proof.) Note that the resulting abelian scheme is not unique, but it is unique up to isogeny.

To do the higher-dimensional case, we first assume that $X$ admits a simple normal crossings compactification $\bar{X}$ and $\calE$ is a logarithmic $F$-isocrystal with nilpotent residues. (We recall the notion of logarithmic $F$-isocrystals in Appendix \ref{section:log}.) A technique of Katz, combined with slope bounds originally due to Lafforgue, allow one to construct a (non-canonical) logarithmic Dieudonn\'e crystal on an open set $U$ of the compactification $\bar X$ whose associated logarithmic $F$-isocrystal is isomorphic to the restriction $\calE|_U$. After the work of Kato-Trihan, this logarithmic Dieudonn\'e crystal yields a natural line bundle, which we call the \emph{Hodge bundle} $\omega$ of the logarithmic Dieudonn\'e crystal, on $\bar{X}$.

For any odd prime $l\neq p$, let $\mathscr{A}_{h,1,l}$ denote the moduli space of principally polarized abelian schemes of dimension $h$ equipped with full level $l$ structure over $\Spec(\ZZ[1/l])$. It is well-known that the Hodge line bundle is ample on $\mathscr{A}_{h,1,l}$ over $\Spec(\ZZ[1/l])$. We use Poonen's Bertini theorem over finite fields together with Drinfeld's result and Zarhin's trick to find a well-adapted family of extremely ample space-filling curves $\bar{C}_n$ of $\bar{X}$ that each map to the minimal compactification $\mathscr{A}^*_{h,1,l}\subset \mathbb{P}^m$ via some \emph{fixed} power of the Hodge bundle $\omega|_{\bar C_n}^r$. (This step uses foundational work of \'Etesse, Kato, Kedlaya,  and Trihan that we explain in Appendix \ref{section:log}.) Note that $H^0(\bar{X},\omega^r)$ is a finite dimensional vector space over a finite field and is hence a finite set. We use this finitude together with the pigeonhole principle to prove that infinitely many of these maps can be pieced together to a rational map $\bar{X}\dashrightarrow \mathscr{A}_{h,1,l}\subset \mathbb{P}^m$. Therefore we obtain an abelian scheme $\psi_U\colon B_U\rightarrow U$ over some open $U\subset X$. The space-filling properties of the $\bar{C}_n$ and Zarhin's work on the Tate isogeny theorem for fields finitely generated over $\Fq$ then allow us to conclude. 

To deduce the general case, we use Kedlaya's semistable reduction theorem for overconvergent $F$-isocrystals.
\begin{rem}We comment on the relation of this article to \cite{krishnamoorthypal2018}. In \cite{krishnamoorthypal2018}, we prove a Lefschetz-style theorem for families of $\text{GL}_2$ type abelian schemes over finite fields. This has the following implication for \cite[Conjecture 1.2]{krishnamoorthypal2018}: if $X/\Fq$ is a smooth projective variety, then there exists an ample curve $C\subset X$ such that if $\calE\in\fisoc{X}_{\Qpbar}$ and $\calE|_C$ comes from an abelian scheme $A_C\rightarrow C$ of $\text{GL}_2$-type, then there is an open subset $U\subset X$ such that $\calE|_U$ comes from an abelian scheme $B_U\rightarrow U$ of $\text{GL}_2$-type. (It follows from Zarhin's work on the Tate isogeny conjecture that $B_C\rightarrow C$ is indeed isogenous to $A_C\rightarrow C$.) To prove this, we use Serre-Tate deformation theory and globalization results of \cite{hartshorneample}, the latter of which critically uses the positivity of $C$ in $X$.  In this article, we only deal with \emph{non-proper} varieties $X/\FF_q$ and we use infinitely many (space-filling, affine) curves together with a result of Drinfeld, which is only known for affine curves. In particular, the main results of \cite{krishnamoorthypal2018} do not imply the main result of this article.
\end{rem}

\section{Preliminaries}
Before proving Theorem \ref{main_thm}, we need several preliminary results. A key ingredient in the proof is the following, which is a byproduct of Drinfeld's first work on the Langlands correspondence for $GL_2$.
\begin{thm}
\label{Theorem:GL2}(Drinfeld) Let $C/\Fq$ be a smooth affine curve and let $L_1$ be a rank 2 irreducible $\Qlbar$ sheaf with determinant $\Qlbar(-1)$. Suppose $L_1$ has infinite local monodromy around some point at $\infty\in\overline{C}\backslash C$. Then $L_1$ comes from a family of abelian varieties in the following sense: let $E$ be the field generated by the Frobenius traces of $L_1$ and suppose $[E:\QQ]=g$. Then there exists an abelian scheme
\[
\pi_C\colon A_{C}\rightarrow C
\]
of dimension $g$ and an isomorphism $E\cong \textrm{End}_{C}(A)\otimes\QQ$, realizing $A_C$ as a $GL_{2}$-type abelian scheme, such that $L_1$ occurs as a summand of $R^1(\pi_C)_*\Qlbar$. Moreover, $A_{C}\rightarrow C$ is totally degenerate around $\infty$.
\end{thm}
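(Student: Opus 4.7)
The plan is to invoke Drinfeld's correspondence between rank $2$ irreducible $\Qlbar$-sheaves on $\ol{C}$ and cuspidal automorphic representations of $GL_{2}$, and then to geometrize the resulting form via Jacquet--Langlands together with the $p$-adic uniformization of a Mumford-type curve.

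By Drinfeld's Langlands correspondence for $GL_{2}$ over function fields, the sheaf $L_{1}$ corresponds to a cuspidal automorphic representation $\pi = \bigotimes_{v}' \pi_{v}$ of $GL_{2}(\mathbb{A}_{F})$, where $F = \Fq(\ol{C})$, matching Satake parameters with Frobenius traces at every unramified closed point. The normalization $\det L_{1} = \Qlbar(-1)$ makes $\pi$ of ``weight $2$'' with the standard central character, and its field of Hecke eigenvalues is exactly $E$. The infinite-local-monodromy hypothesis at $\infty$ translates into $\pi_{\infty}$ being (a twist of) the Steinberg representation, since every other class of admissible irreducible representation of $GL_{2}(F_{\infty})$ corresponds under local Langlands to a parameter with finite inertial image.

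The Steinberg condition at $\infty$ allows a Jacquet--Langlands transfer. After possibly shrinking $C$, choose an auxiliary place $v_{0}$ at which $\pi$ is square-integrable, and let $D/F$ be the quaternion algebra ramified exactly at $\{\infty, v_{0}\}$; then $\pi$ lifts to an automorphic representation $\pi^{D}$ of $D^{\times}(\mathbb{A}_{F})$. The representation $\pi^{D}$ is geometrically realized in the \'etale cohomology of a Drinfeld--Stuhler moduli space of $\mathcal{D}$-elliptic sheaves (for $\mathcal{D}$ a maximal order in $D$) with an appropriate level structure; this moduli space is smooth and proper over an open subscheme $\ol{C} \setminus S$ for a finite set $S \supset \{\infty, v_{0}\}$, with generic fiber a smooth proper curve $X_{D}/F$. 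The $\pi^{D}$-isotypic factor $A \subset \mathrm{Jac}(X_{D})$ is then an abelian variety over $F$ of dimension $g = [E:\QQ]$ equipped with an embedding $E \hookrightarrow \mathrm{End}_{F}(A) \otimes \QQ$ that realizes $A$ as a $GL_{2}$-type abelian variety.

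The rational $l$-adic Tate module of $A$ decomposes under $E \otimes \Ql$, and the summand corresponding to the embedding $E \hookrightarrow \Qlbar$ picked out by the Frobenius traces of $L_{1}$ reproduces $L_{1}$; spreading $A$ out over an open $C' \subset C$ disjoint from the bad locus yields the asserted abelian scheme $\pi_{C}: A_{C} \to C$ with $L_{1}|_{C'}$ a summand of $R^{1}(\pi_{C})_{*}\Qlbar$. The total degeneration at $\infty$ follows from Drinfeld's $p$-adic uniformization: since $D$ is ramified at $\infty$, the formal completion of the Drinfeld--Stuhler moduli space at $\infty$ is a quotient of Drinfeld's $p$-adic upper half plane, exhibiting $X_{D}$ as a Mumford curve there and forcing $A$ to have split multiplicative reduction. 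The main obstacle is the simultaneous compatibility of these ingredients --- that the Hecke-isotypic factor has dimension exactly $g$, endomorphism algebra over $\QQ$ equal to $E$ (not larger), Tate-module summand equal to $L_{1}$, and full rather than partial degeneration at $\infty$ --- all of which lies at the heart of Drinfeld's original work on elliptic modules and shtukas together with the function-field Jacquet--Langlands correspondence, and is best handled by citing those results as a package.
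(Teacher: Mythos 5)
Your proposal takes a genuinely different route from the one the paper cites. The paper's proof (via Snowden--Tsimerman's repackaging of Drinfeld's two papers) works with the moduli of rank-2 Drinfeld $A$-modules for $A=\Gamma(C,\calO_C)$, i.e.\ the \emph{split} algebra $M_2(F)$; the modular scheme is affine, one compactifies, and the abelian scheme is the $\pi$-isotypic quotient of the Jacobian. You instead propose a Jacquet--Langlands transfer to a quaternion algebra $D$ ramified at $\{\infty,v_0\}$ and realize $\pi^D$ in the cohomology of a proper moduli space of $\calD$-elliptic sheaves. The latter, when available, has the virtue that the Drinfeld--Stuhler curve is already proper so no compactification or cusp analysis is needed; but it is strictly less general.

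The gap is the existence of the auxiliary place $v_0$. Over the global function field $F$, a quaternion division algebra must be ramified at an even, nonzero number of places, so you really do need $\pi$ to be square-integrable somewhere other than $\infty$. This can simply fail: if $L_1$ is unramified at every point of $\bar{C}$ other than $\infty$ (a perfectly admissible situation for the theorem as stated), then $\pi_v$ is a spherical principal series --- not square-integrable --- at every $v\neq\infty$, and no $D$ exists. ``Shrinking $C$'' does not help, since removing points of $C$ neither changes $\pi$ nor creates square-integrability. Drinfeld's construction with the split algebra has the Steinberg condition at $\infty$ as its \emph{only} genuine hypothesis and thus works in full generality. Separately, even when $v_0$ does exist, your phrasing (``spreading $A$ out over an open $C'\subset C$'') yields the abelian scheme only on a proper open subscheme; the theorem asserts it over all of $C$, which in Drinfeld's setup follows because $L_1$ is lisse on $C$, hence the level of the modular problem is supported on $\bar{C}\setminus C$ and the modular scheme has good reduction over $C$. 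Finally, the theorem claims an \emph{isomorphism} $E\cong\End_C(A)\otimes\QQ$, not just an embedding; that the endomorphism algebra is no bigger than $E$ is an analogue of Ribet's theorem and needs a word, which your sketch elides. The translations you do make --- infinite local monodromy at $\infty$ forcing a twist of Steinberg by Grothendieck's $\ell$-adic monodromy theorem, the Hecke field being $E$, the $g$-dimensionality of the isotypic factor, and total degeneracy at $\infty$ --- are all correctly identified.
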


See \cite[Proof of Proposition 19, Remark 20]{snowden2018constructing} for how to recover this result from Drinfeld's work. This amounts to combining \cite[Main Theorem, Remark 5]{drinfeld1983} with \cite[Theorem 1]{drinfeld1977}.

For completeness, we briefly recall the theory of companions and what is known about them. For a thorough summary about the definitions and also what is known, we refer the reader to \cite{kedlayacompanions}. Alternatively, the reader may see \cite[Section 4]{krishnamoorthypal2018}.

\begin{defn}\label{def:companions}Let $X/\mathbb F_q$ be a smooth, geometrically connected variety. Let $\lambda$ be a prime number and let $\mathcal E$ denote either a smooth $\Qlambar$ sheaf on $X$ if $\lambda\neq p$ or an overconvergent $F$-isocrystal with coefficients in $\Qpbar$ if $\lambda=p$. Following Kedlaya  \cite[Section 1]{kedlayacompanions}, we call such $\calE$ \emph{coefficient objects}.

\begin{enumerate}
\item Let $l\neq p$ be a prime number and let $L$ be a lisse $\Qlbar$-sheaf on $X$. Fix a (possibly non-continuous) field isomorphism $\iota\colon \Qlambar\rightarrow \Qlbar$. We say that $L$ is an \emph{$\iota$-companion of $\mathcal E$} if for all closed points $x\in X$, we have:
$$\iota(P_x(\mathcal E,t))=P_x(L,t)\in \Qlbar[t],$$
where $P_x(-,t)$ denotes the reverse characteristic polynomial at the closed point $x$. 
\item Let $\mathcal F$ be an overconvergent $F$-isocrystal on $X$ with coefficients in $\Qpbar$ and fix an isomorphism $\iota\colon \Qlambar\rightarrow \Qpbar$. We say $\mathcal F$ is an \emph{$\iota$-companion of $\mathcal E$} if for all closed points $x\in X$, we have:
$$\iota(P_x(\mathcal E,t))=P_x(\mathcal F,t)\in \Qpbar[t].$$

\end{enumerate}
In either of these cases, we say that the $\iota$-companion to $\mathcal E$ exists.

\end{defn}
Suppose $\mathcal E$ is semi-simple and each irreducible summand has algebraic determinant. Then Deligne's conjecture, together with Crew's $p$-adic enchancement, predict that all $\iota$-companions to $\calE$ exist. It follows from work of Abe, Abe-Esnault, Deligne, Drinfeld, Kedlaya, and Lafforgue \cite{abe2016lefschetz, abe2013langlands,deligne2012finitude,drinfeld2012conjecture, lafforgue2002chtoucas} this conjecture is known to hold in the following cases.
\begin{thm}\label{Theorem:companions}Let $X/\FF_q$ be a smooth, geometrically connected variety. Let $\mathcal E$ be a semi-simple coefficient object on $X$ such that the irreducible summands have algebraic determinant. Then:
\begin{itemize}
\item If $\dim(X)=1$, then all $\iota$-companions exist (\cite[Th\'eor\`eme VII.6]{lafforgue2002chtoucas} and \cite[Theorem 4.4.1]{abe2013langlands}).
\item For any $l\neq p$ and any isomorphism $\iota\colon \Qlambar\rightarrow \Qlbar$, the $\iota$-companion to $\mathcal E$ exists (\cite[Theorem 1.1]{drinfeld2012conjecture} and \cite[Theorem 4.2]{abe2016lefschetz} or \cite[Theorem 0.4.1]{kedlayacompanions}).
\end{itemize}
\end{thm}

In particular, $p$-adic companions are not known to exist when $\dim(X)>1$, although Kedlaya has recently proposed a promising strategy \cite{kedlayacompanionsii}.

\begin{prop}\label{prop:shape_ell_companion}Maintain the hypotheses of Theorem \ref{main_thm}. Let $\iota\colon \Qpbar\rightarrow \Qlbar$ be a field isomorphism and let $L:=\ ^{\iota}\calE$ be the (semi-simple) $\iota$-companion to $\calE$. Then:
\begin{itemize}
\item The isomorphism class of $L$ is independent of the choice of $\iota$.
\item Let $L_i$ be an irreducible summand of $L$. Then $L_i$ has rank 2, determinant $\Qlbar(-1)$, and infinite monodromy at infinity. 
\end{itemize}
\end{prop}
\begin{proof}
For all closed points $x$ of $X$, we have that $P_x(L,t)\in \QQ[t]\subset \Qlbar[t]$ as $\iota(\QQ)=\QQ\subset\Qlbar$. The first statement then follows from the Cebotarev density theorem and the Brauer-Nesbitt theorem.

If $\calE_i$ is an irreducible summand of $\calE$, then $^{\iota}\calE_i$ is an irreducible $\Qlbar$-sheaf by \cite[Theorem 3.3.1]{kedlayacompanions}. As the companions relation commutes with direct sum, it follows that if $\calE\cong \oplus \calE_i^{m_i}$ is the decomposition of $\calE$ into irreducible objects in $\fisocd{X}_{\Qpbar}$, then $L\cong\oplus (^{\iota}\calE_i)^{m_i}$ is a decomposition of $L$ into irreducible lisse $\Qlbar$-sheaves on $X$. One may observe that $\det(^{\iota}\calE_i)\cong \Qlbar(-1)$ because for every closed point $x$ of $X$, the constant term of $P_x(\calE_i,t)$ is $q$ and hence the constant term of $P_x(^{\iota}\calE_i,t)$ is also $q$. Finally, suppose for contradiction that there exists an $i$ with $L_i:=\ ^{\iota}\calE_i$ having finite local monodromy at infinity. Then there exists a smooth projective variety $\bar{X}'/\Fq$, and open dense subscheme $X'\subset \bar{X}'$, and an alteration $f\colon X'\rightarrow X$ such that $f^*L_i$ extends to $\bar{X}'$. It follows from \cite[Corollary 3.3.3]{kedlayacompanions} that $f^*\calE_i$ also extends to $\bar{X}'$, contradicting the hypothesis that $\calE_i$ had infinite local monodromy at infinity.
\end{proof}

We will need the following lemma to ensure that, given the hypotheses of Theorem \ref{main_thm}, every $p$-adic companion of $\calE_i$ is again a summand of $\calE$; moreover, the companion relation preserves multiplicity in the isotypic decomposition of $\calE$. 
\begin{lem}\label{lem:all_companions}
Let $X/\Fq$ be a smooth, geometrically connected scheme.

\begin{enumerate}

\item Let $l\neq p$ be a prime and let $L$ be a lisse, semi-simple  $\Qlbar$-sheaf on $X$, all of whose irreducible summands $L_i$ have algebraic determinant. Suppose for all closed points $x$ of $X$, we have:
$$P_x(L,t)\in \QQ[t]\subset \Qlbar[t].$$
Let $L_i$ be an irreducible summand of $L$ that occurs with multiplicity $m_i$ and $\iota\in \text{Aut}_{\QQ}(\Qlbar)$ be a field automorphism. Then the $\iota$-companion to $L_i$, denoted $^{\iota}L_i$, is isomorphic to an irreducible summand of $L$ that occurs with multiplicity $m_i$.

 \item Let $\calF$ be a semi-simple object of $\fisocd{X}_{\Qpbar}$, all of whose irreducible summands $\calF_i$ have algebraic determinant. Suppose for all closed points $x$ of $X$, we have:
$$P_x(\calF,t)\in \QQ[t]\subset \Qpbar[t].$$
Let $\calF_i$ be an irreducible summand of $\calF$ that occurs with multiplicity $m_i$. Let $\iota\in \text{Aut}_{\QQ}(\Qpbar)$. Then the $\iota$-companion of $\calF_i$, denoted $^{\iota}\calF_i$,exists and is isomorphic to a direct summand of $\calF$  that occurs with multiplicity $m_i$.
\end{enumerate}
\end{lem}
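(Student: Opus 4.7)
My plan for both parts is structurally identical: produce the global $\iota$-companion of $L$ (resp.\ $\calF$), show it is isomorphic to the original object, and then extract the desired statement about summands from the uniqueness of the isotypic decomposition. First I would invoke existence of $\iota$-companions for each irreducible summand. For part (1), since each $L_i$ is irreducible with algebraic determinant, $^{\iota}L_i$ exists on the higher-dimensional $X$ by Drinfeld's companions theorem (which builds on L.~Lafforgue's curve case). Taking direct sums yields a global $\iota$-companion $^{\iota}L = \bigoplus_i m_i \cdot {}^{\iota}L_i$. For part (2), since $\iota \in \mathrm{Aut}_{\QQ}(\Qpbar)$ we are asking only for a $p$-adic-to-$p$-adic companion, whose existence in higher dimension follows from Abe's curve-level results together with the Abe--Esnault theorem; again, this is compatible with direct sums.

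Next I would show $^{\iota}L \cong L$ (resp.\ $^{\iota}\calF \cong \calF$) by a Chebotarev-type argument. At every closed point $x\in |X|$, the characteristic polynomial $P_x({}^{\iota}L,t)$ is obtained by applying $\iota$ coefficient-wise to $P_x(L,t)$; by hypothesis $P_x(L,t)\in \QQ[t]$ is fixed by $\iota$, hence $P_x({}^{\iota}L,t)=P_x(L,t)$ for every $x$. A semisimple lisse $\Qlbar$-sheaf on a smooth connected scheme over $\Fq$ is determined up to isomorphism by its Frobenius characteristic polynomials at closed points (by Chebotarev density and semisimplicity), and the analogous rigidity holds for semisimple overconvergent $F$-isocrystals (this is the $p$-adic Chebotarev that enters Abe's proof of the $p$-adic Langlands correspondence). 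Either way, equality of all Frobenius characteristic polynomials forces $^{\iota}L\cong L$ and $^{\iota}\calF\cong \calF$.

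Finally I would conclude by comparing isotypic decompositions. We have $L \cong \bigoplus_i m_i L_i$ and, via the companion construction, $L \cong {}^{\iota}L \cong \bigoplus_i m_i \cdot {}^{\iota}L_i$. Uniqueness of the isotypic decomposition of a semisimple object then forces each $^{\iota}L_i$ to be isomorphic to some $L_j$ with matching multiplicity $m_j=m_i$, which is exactly the claim; the $F$-isocrystal case is verbatim the same. The main obstacle I anticipate is bookkeeping around the two cited black boxes: namely the existence of $p$-adic-to-$p$-adic companions on higher-dimensional $X$ for the irreducible summands (so that one genuinely uses Abe--Esnault and not only the curve-level theorem of Abe), and the precise rigidity statement for semisimple overconvergent $F$-isocrystals that underwrites the Chebotarev step; once these are in hand, the argument is purely formal.
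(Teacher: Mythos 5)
Your argument for part (1) is essentially correct and, at the final step, uses a slightly different bookkeeping device from the paper's: you compare isotypic decompositions of $L$ and ${}^{\iota}L$ directly, whereas the paper computes the multiplicity of an irreducible $M$ in $L$ as $\dim H^0(X,M^{\vee}\otimes L)$ and applies the $L$-function argument of \cite[3.2]{abe2016lefschetz} to show this dimension is companion-invariant. Both routes work once one knows ${}^{\iota}L_i$ exists (Drinfeld for $l\neq p$) and that the semisimple $\iota$-companion of $L$ is isomorphic to $L$ (Chebotarev, since $P_x(L,t)\in\QQ[t]$).

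For part (2), however, there is a genuine gap in your plan. You assert that the $p$-adic-to-$p$-adic companion ${}^{\iota}\calF_i$ (for $\iota\in\Aut_{\QQ}(\Qpbar)$) exists on a higher-dimensional $X$ ``by Abe's curve-level results together with Abe--Esnault.'' This is not available as a black box: Abe--Esnault (and Kedlaya) produce \emph{crystalline-to-\'etale} companions, i.e.\ given an overconvergent $F$-isocrystal and $\sigma\colon\Qpbar\to\Qlbar$ they construct a lisse $\Qlbar$-sheaf. The direction you need --- producing a new overconvergent $F$-isocrystal with prescribed Frobenius characteristic polynomials, either from an \'etale sheaf or from a crystal via an automorphism of $\Qpbar$ --- is precisely the open part of Deligne's companions conjecture; it is known only for curves (Abe), and the paper says so explicitly right before the proof. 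The paper's proof circumvents this: fix $\sigma\colon\Qpbar\isomto\Qlbar$, let $L_i$ be the $\sigma$-companion of $\calF_i$ and $L$ that of $\calF$ (these exist by Abe--Esnault/Kedlaya), note that $\calF_j$ is the $\iota$-companion of $\calF_i$ if and only if $L_j$ is the $\sigma\iota\sigma^{-1}$-companion of $L_i$, and then invoke the \'etale case. The point is that the crystalline $\iota$-companion of $\calF_i$ is not obtained by an existence theorem; it is \emph{found} among the irreducible summands of the given $\calF$, because the corresponding \'etale summand $L_j$ of $L$ already has $\calF_j$ as its $\sigma^{-1}$-companion by construction. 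You should restructure part (2) as a reduction to part (1) in this way rather than citing a nonexistent higher-dimensional $p$-to-$p$ existence theorem.
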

\begin{proof}
We reduce the crystalline case to the \'etale case. (Note that we could have equivalently proceeded by reduction to curves using \cite{abe2016lefschetz}.) As $\calF$ is semisimple, write an isotypic decomposition:
$$\displaystyle \calF\cong \bigoplus_{i=1}^a\calF_i^{m_i}.$$
Note that each $\calF_i$ is pure by \cite[Theorem 2.7]{abe2016lefschetz}. Fix an isomorphism $\sigma\colon \Qpbar\rightarrow \Qlbar$.
By \cite[Theorem 4.2]{abe2016lefschetz} or \cite[Corollary 3.5.3]{kedlayacompanions}, the $\sigma$-companion to each $\calF_i$ exists as an irreducible lisse $\Qlbar$-sheaf $L_i$. Setting $L$ to be the semi-simple $\sigma$-companion of $\calF$, we have:
$$\displaystyle L\cong \bigoplus_{i=1}^aL_i^{m_i}.$$

Set $\iota\in \text{Aut}_{\QQ}(\Qpbar)$. Then $\calF_j$ is the $\iota$-companion to $\calF_i$ if and only if $L_j$ is the $\sigma\circ\iota\circ \sigma^{-1}$-companion to $L_i$. Therefore it suffices to prove the result in the \'etale setting.

Let $M$ be an irreducible lisse $\Qlbar$-sheaf on $X$. Then $M$ is pure by \cite[Th\'eor\`eme 1.6]{deligne2012finitude} and class field theory. Then the multiplicity of $M$ in the semisimple sheaf $L$ is: $\dim(H^0(X,M^{*}\otimes L))$. By assumption we have that for all closed points $x$ of $X$, $P_x(L,t)\in \QQ[t]\subset \Qlbar[t]$. Let $\iota\in \Aut_{\QQ}(\Qlbar)$, and note that the semi-simple $\iota$-companion to $L$ is again isomorphic to $L$. Then we claim that the $\iota$-companion to $M^*\otimes L$ is isomorphic to $(^\iota M^*)\otimes L$. Indeed, this follows from the following two facts. First of all, both $M^*\otimes L$ and $(^\iota M^*)\otimes L$, being the tensor product of semi-simple representations of characteristic 0, are semi-simple. Second of all, it follows from the fundamental theorem of symmetric functions that for fixed $d,e\in \mathbb N$ there exist universal polynomials $(u_i)_{i=0}^{de}$ in the ring $\mathbb Q[\alpha_1,\dots,\alpha_d,\beta_1,\dots,\beta_e]$ with the following property: let $V$ and $W$ be finite dimensional vector spaces over a field $K$ of characteristic 0 and of dimensions $d$ and $e$ and let $A$ and $B$ be linear operators on $V$ and $W$ respectively. Write $P(A,t)=\sum_{i=0}^{d}a_it^i$ and $P(B,t)=\sum_{j=0}^{e}b_jt^j$ for the reverse characteristic polynomials of $A$ and $B$. Then the reverse characteristic polynomial $P(A\otimes B,t)$ of $A\otimes B$ is equal to:
$$\displaystyle P(A\otimes B,t)=\sum_{k=0}^{de}u_k(a_0,\dots,a_d,b_0,\dots,b_e)t^k.$$

Translating back, let $x$ be a closed point of $X$ and write $P_x(M^*,t)=\sum_{i=0}^d a_it^i$ and $P_x(L,t)=\sum_{j=0}^e b_jt^j$. Then we have:
$$\displaystyle P_x(M^*\otimes L,t)=\sum_{k=0}^{de}u_k(a_0,\dots,a_d,b_0,\dots,b_e)t^k.$$
It follows that
$$P_x(^{\iota}(M^*\otimes L),t)=P_x((^{\iota}M^*)\otimes\ ^{\iota}L,t)=\sum_{k=0}^{de}u_k(\iota(a_0),\dots,\iota(a_d),\iota(b_0),\dots,\iota(b_e))t^k.$$ 
But $\iota(b_j)=b_j$ because $b_j\in \mathbb Q$ for all $j$. Therefore $P_x(^{\iota}(M^*\otimes L),t)=P_x(^{\iota}(M^*)\otimes L,t)$. The semisimplicity of $^{\iota}(M^*\otimes L)$ and $^{\iota}M^*\otimes L$ allow us to conclude that $^{\iota}(M^*\otimes L)$ is isomorphic to $^{\iota}M^*\otimes L$.

On the other hand, the exact argument of \cite[3.2]{abe2016lefschetz} for lisse $l$-adic sheaves implies that $\dim(H^0(X,M^*\otimes L))=\dim(H^0(X,\ ^{\iota}(M^* \otimes L))$. Therefore $\dim(H^0(X,M^*\otimes L))=\dim(H^0(X,(^{\iota}M^*)\otimes L))$, and the result follows.

\end{proof}
\begin{rem}The argument of \cite[3.2]{abe2016lefschetz} cited in the proof of Lemma \ref{lem:all_companions} is based on \cite[Cor VI.3]{lafforgue2002chtoucas} and uses $L$-functions. A similar idea is used in the proof that the companions relations preserves irreducibility, which was crucial to Proposition \ref{prop:shape_ell_companion}. See also \cite[Lemma 3.1.5, Theorem 3.3.1]{kedlayacompanions}.
\end{rem}

\begin{rem}\label{rem:companions_tensor}It follows from the argument of Lemma \ref{lem:all_companions} that if $X/\Fq$ is smooth and geometrically connected and if $\calE, \calF\in \fisocd{X}_{\Qpbar}$ are semi-simple objects, all of whose summands are algebraic, then for any field isomorphism $\iota\colon \Qpbar\rightarrow \Qlbar$, we have: $^{\iota}(\calE\otimes \calF)\cong\ ^{\iota}\calE\otimes\ ^{\iota}\calF,$
i.e., the relation of being $\iota$-companions commutes with tensor product.
\end{rem}

\begin{rem}\label{rem:mult_1}It follows from Lemma \ref{lem:all_companions} that, in the context of Theorem \ref{Theorem:GL2}, there is a decomposition:

$$\displaystyle R^1(\pi_C)_*\Qlbar\cong \bigoplus_{i=1}^g(L_i)$$
where the $L_i$ form a complete set of $\Qlbar$ companions. There are exactly $g$ non-isomorphic companions because the field generated by Frobenius traces of $L_1$ is isomorphic to $E$ and the $l$-adic companions are in bijective correspondence with the embeddings $E\hookrightarrow \Qlbar$. In particular, each companion occurs with multiplicity 1. In fact, as $E\cong \text{End}_C(A_C)\otimes \QQ$, it follows that $E\otimes \Qlbar$ acts on $ R^1(\pi_C)_*\Qlbar$. On the other hand, $E\otimes \Qlbar\cong \prod_{i} \Qlbar$, where $i$ runs over the embeddings $E\hookrightarrow \Qlbar$. For each $i$, pick a non-trivial idempotent $e_i\in E\otimes \Qlbar$ whose image is the $i^\text{th}$ component of the direct product decomposition. The above direct sum decomposition is induced by these $e_i$.
\end{rem}

To apply Drinfeld's Theorem \ref{Theorem:GL2}, we will use the following lemma.

\begin{lem}\label{lem:map_on_curves}Let $Y/\Fq$ be a smooth, geometrically connected, projective scheme and let $\alpha$ be a line bundle on $Y$. Let $M\subset \mathbb{P}^m_{\Fq}$ be a closed subset. Suppose there exists an infinite collection $(C_n)_{n\in \mathbb{N}}$ of smooth, projective, geometrically connected, closed subcurves $C_n\subset Y$ such that
\begin{enumerate}
\item for each $n\in \mathbb{N}$, the natural map $H^0(Y,\alpha)\rightarrow H^0(C_n,\alpha|_{C_n})$ is an isomorphism;
\item for any infinite subset $S\subset \mathbb{N}$, the union:
$$\displaystyle \bigcup_{n\in S}C_n$$
is Zariski dense in $Y$; 
\item for each curve $C_n$, there exists $m+1$ globally generating sections 
$$t_{n,0},\dots,t_{n,m}\in H^0(C_n,\alpha|_{C_n})$$
such that the induced morphism to $\mathbb{P}^m$ factors through $M$:
\[
\xymatrix{C_n\ar[dr]\ar[r]^{f_n} & \mathbb{P}^m\\
 & M\ar[u].
}
\]
\end{enumerate}
Then there exist global sections $\tilde{t}_0,\dots,\tilde{t}_m\in H^0(Y,\alpha)$ such that the induced rational map $\tilde{f}\colon Y\dashrightarrow \mathbb{P}^m$ has image in $M$. Moreover, $\tilde{f}$ can be chosen to be compatible with infinitely many of the maps $f_n$. 
\end{lem}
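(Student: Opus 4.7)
\medskip

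\noindent\textbf{Proof proposal.} The plan is to exploit the fact that, since $Y$ is projective over the finite field $\Fq$, the space $H^0(Y,\alpha)$ is a finite-dimensional $\Fq$-vector space and hence a \emph{finite set}. Combined with the isomorphism in hypothesis (1), this will let us pigeonhole the data coming from the $f_n$ into a single global choice.

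First I would use hypothesis (1): for each $n$, there are unique global sections $\tilde t_{n,0},\dots,\tilde t_{n,m}\in H^0(Y,\alpha)$ restricting to $t_{n,0},\dots,t_{n,m}$ on $C_n$. This produces an infinite sequence of $(m+1)$-tuples inside the finite set $H^0(Y,\alpha)^{m+1}$. By the pigeonhole principle, there is an infinite subset $S\subset\NN$ on which the tuple $(\tilde t_{n,0},\dots,\tilde t_{n,m})$ is constant; denote the common value by $(\tilde t_0,\dots,\tilde t_m)$.

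Let $\tilde f\colon Y\dashrightarrow \PP^m$ be the rational map induced by $(\tilde t_0,\dots,\tilde t_m)$, with domain of definition $V\subset Y$ equal to the complement of the common zero locus of these sections. For every $n\in S$, the restrictions $\tilde t_i|_{C_n}=t_{n,i}$ are globally generating on $C_n$ by hypothesis (3), so $C_n\cap(Y\setminus V)=\emptyset$; that is, $C_n\subset V$ and $\tilde f|_{C_n}=f_n$. In particular $\tilde f(C_n)\subset M$ for every $n\in S$.

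It remains to show that the image of $\tilde f$ lies in $M$. Because $M\subset\PP^m$ is closed, $\tilde f^{-1}(M)$ is a closed subscheme of $V$ containing $\bigcup_{n\in S}C_n$. By hypothesis (2), this union is Zariski dense in $Y$, hence its intersection with the open set $V$ is Zariski dense in $V$. Therefore $\tilde f^{-1}(M)=V$, so $\tilde f$ factors through $M$, and by construction it agrees with $f_n$ on each $C_n$ for $n$ in the infinite set $S$. The only real subtlety — and the one step where care is needed — is confirming that the $C_n$ with $n\in S$ actually lie in the domain of $\tilde f$, which is why it matters that the sections are required to be globally generating on each $C_n$ rather than merely to define a rational map.
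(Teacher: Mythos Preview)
Your proof is correct and follows essentially the same approach as the paper: use finiteness of $H^0(Y,\alpha)$ over $\Fq$ to pigeonhole the lifted tuples, obtain a rational map $\tilde f$, and conclude by Zariski density of the curves in the chosen infinite subset. If anything, your write-up is slightly more careful than the paper's in explicitly verifying that each $C_n$ with $n\in S$ lies in the domain of definition of $\tilde f$.
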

\begin{proof}
There are finitely many ordered $m+1$-tuples of sections $H^0(Y,\alpha)\cong H^0(C_n,\alpha|_{C_n})$ because $H^0(Y,\alpha)$ is a finite dimensional vector space over $\Fq$. By the pigeonhole principle, in our infinite collection we may find an $m+1$-tuple of sections $\tilde{t}_0,\dots,\tilde{t}_m\in H^0(Y,\alpha)$ such that there exists an infinite set $S\subset \mathbb{N}$ with $$(\tilde{t}_0,\dots,\tilde{t}_m)|_{C_n}=(t_{n,0},\dots,t_{n,m})$$
for every $n\in S$. There is therefore an induced rational map $\tilde{f}\colon Y\dashrightarrow \mathbb{P}^m$ with $\tilde{f}|_{C_n}=f_n$ for each $n\in S$. On the other hand, the collection $(C_n)_{n\in S}$ is Zariski dense in $Y$ by assumption and $\tilde{f}(C_n)\subset M$; therefore the image of $\tilde{f}$ lands inside of $M$, as desired.
\end{proof}
Lemma \ref{lem:map_on_curves} has two key ingredients. The first ingredient is that if $X/\Fq$ is a projective variety and $\alpha$ is a coherent sheaf on $X$, then $H^0(X,\alpha)$ is a finite set. The second ingredient is the pigeonhole principle. To use  Lemma \ref{lem:map_on_curves}, the following definition will be useful.

\begin{defn}\label{def:good_curve}Let $\bar X/k$ be a smooth, geometrically connected, projective scheme of dimension at least 2, let $Z\subset \bar X$ be a reduced simple normal crossings divisor, and set $X:=X\backslash Z$. Let $\bar U\subset \bar X$ be an open subset whose complement has codimension at least 2. Let $(x_j)_{j=1}^s$  be a finite collection of closed points of $U:=\bar U\cap X$. Let $\alpha$ be a line bundle on $\bar X$. We say that $\bar C\subset \bar U$ is a \emph{good curve} for the quintuple $(\bar X, X, \bar U, \alpha, (x_j)^s_{j=1})$ if
\begin{itemize}
\item $\bar{C}$ is the smooth complete intersection of smooth ample divisors of $\bar{X}$ that intersect $Z$ in good position;
\item $\bar{C}$ contains each of the closed points $x_j$, for $j=1\dots s$;
\item the natural map $H^0(\bar{X},\alpha)\rightarrow H^0(\bar{C},\alpha|_{\bar C})$ is an isomorphism.
\end{itemize}
\end{defn}
In the proof of Theorem \ref{main_thm}, we will need to know that good curves exist. This is guaranteed by the following two results.
\begin{prop}\label{proposition:divisor_absorbs_sections}
Let $Y/k$ be a smooth, geometrically connected, projective scheme of dimension $d\geq 2$ and let $\alpha$ be a line bundle on $Y$. Let $D\subset Y$ be an ample divisor. Then there exists an $s_0>0$ such that for any $s\geq s_0$, and for any integral divisor $E\in |sD|$ in the linear series, the natural map:
$$H^0(Y,\alpha)\rightarrow H^0(E,\alpha|_E)$$
is an isomorphism.
\end{prop}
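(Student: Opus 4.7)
The natural approach is the standard short exact sequence for a divisor plus Serre vanishing. Specifically, since $E \in |sD|$ is a Cartier divisor, I would use the structure sequence
\[
0 \to \calO_Y(-E) \to \calO_Y \to \calO_E \to 0,
\]
twist by $\alpha$ to obtain
\[
0 \to \alpha \otimes \calO_Y(-sD) \to \alpha \to \alpha|_E \to 0,
\]
and then take global sections. The restriction map $H^0(Y,\alpha) \to H^0(E, \alpha|_E)$ will be an isomorphism exactly when both $H^0(Y, \alpha \otimes \calO_Y(-sD))$ and $H^1(Y, \alpha \otimes \calO_Y(-sD))$ vanish.

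To get both vanishings from a single statement, I would apply Serre duality: for $i = 0, 1$,
\[
H^i(Y, \alpha \otimes \calO_Y(-sD)) \cong H^{d-i}\bigl(Y, \omega_Y \otimes \alpha^{-1} \otimes \calO_Y(sD)\bigr)^{*}.
\]
Now $D$ is ample and $\omega_Y \otimes \alpha^{-1}$ is a fixed coherent (in fact locally free) sheaf, so Serre's vanishing theorem produces an $s_0 > 0$ such that for every $s \geq s_0$ and every $j > 0$ one has $H^j(Y, \omega_Y \otimes \alpha^{-1} \otimes \calO_Y(sD)) = 0$. Applying this to $j = d-i$ (which is $\geq 1$ precisely because $d \geq 2$ and $i \in \{0,1\}$) kills both obstruction groups simultaneously, and the restriction map becomes an isomorphism for every integral $E \in |sD|$ with $s \geq s_0$.

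There is no real obstacle: the hypothesis $d \geq 2$ is exactly what is needed to make the Serre-dual group live in positive degree for $i = 1$ (the $i = 0$ case only needs $d \geq 1$), and the independence of $s_0$ from the particular choice of $E$ follows because the line bundle $\alpha \otimes \calO_Y(-sD)$ depends only on the class $sD$, not on the representative. If desired, one can avoid Serre duality entirely by invoking Serre vanishing directly on $\alpha \otimes \calO_Y(-sD)$ after rewriting it as a twist of $\alpha$ by a sufficiently negative power of an ample bundle and using that $H^i$ vanishes in the appropriate range for negative enough twists of any coherent sheaf on a projective scheme of dimension $d$; the duality formulation above is simply the cleanest bookkeeping.
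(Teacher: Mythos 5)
Your proof is correct and follows essentially the same approach as the paper: the short exact sequence $0 \to \alpha(-E) \to \alpha \to \alpha|_E \to 0$, Serre duality to rewrite $h^i(Y,\alpha(-sD))$ as $h^{d-i}$ of a fixed sheaf twisted by $\mathcal{O}_Y(sD)$, and Serre vanishing applied uniformly over the linear series. The observation about why $d \geq 2$ is needed also matches the paper's reasoning.
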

\begin{proof}
For any $s>0$, let $E\in |sD|$ be an integral divisor in the linear series. Then there is an exact sequence:
$$0\rightarrow \alpha(-E)\rightarrow \alpha\rightarrow \alpha|_E\rightarrow 0.$$
If $h^0(Y,\alpha(-E))=h^1(Y,\alpha(-E))=0$, then by the long exact sequence in cohomology, the restriction map $H^0(Y,\alpha)\rightarrow H^0(E,\alpha|_E)$ is an isomorphism. Our task is therefore to show that for all sufficiently large $s$, $h^0(Y,\alpha(-sD))=h^1(Y,\alpha(-sD))=0$.

Let $\mathfrak L$ be the canonical bundle of $Y$. Then by Serre duality, $h^i(Y,\alpha(-sD))=h^{d-i}(Y,\alpha^{\vee}(sD)\otimes \mathfrak L)$. It follows from Serre vanishing that there exists an $s_0>0$ such that for any $s\geq s_0$ and for any $i<d$, $h^{d-i}(Y,\alpha^{\vee}(sD)\otimes \mathfrak L)=0$. Therefore for any $s\geq s_0$ and for any $i<d$, $h^i(Y,\alpha(-sD))=0$ and the result follows.

\end{proof}

\begin{lem}\label{lem:good_curves_exist}Let $\bar X/\Fq$ be a smooth, geometrically connected, projective scheme of dimension at least 2, let $Z\subset \bar X$ be a reduced simple normal crossings divisor, and set $X:=\bar{X}\backslash Z$. Let $\bar U\subset \bar X$ be an open subset whose complement has codimension at least 2. Let $(x_j)_{j=1}^s$  be a finite collection of closed points of $U:=\bar U\cap X$. Let $\alpha$ be a line bundle on $\bar X$. Then there is a good curve $\bar C\subset \bar U$ for the quintuple $(\bar X, X, \bar U, \alpha, (x_j)^s_{j=1})$
\end{lem}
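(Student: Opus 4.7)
The plan is to construct $\bar C$ inductively as a complete intersection of $d-1$ smooth ample divisors $D_1, \ldots, D_{d-1}$ on $\bar X$, where $d := \dim \bar X \geq 2$, chosen in sufficiently high powers of a fixed ample line bundle so that all three conditions of Definition \ref{def:good_curve} are met. The essential tool is Poonen's Bertini theorem over $\Fq$, which produces smooth divisors of positive density in a given linear series while allowing one to impose finitely many local Taylor-coefficient conditions at closed points (smoothness on specified strata, transversality with a SNC divisor, passage through specified closed points).

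Fix an ample line bundle $H$ on $\bar X$, set $Y_0 := \bar X$, and let $F := \bar X \setminus \bar U$, so that $\codim_{\bar X} F \geq 2$. I will inductively construct smooth divisors $D_i \in |s_i H|$ on $\bar X$ and smooth subvarieties $Y_i := D_1 \cap \cdots \cap D_i$ of pure dimension $d-i$ such that at each stage $Z \cup D_1 \cup \cdots \cup D_i$ is simple normal crossings on $\bar X$, every $D_j$ contains each closed point $x_k$, and the restriction $H^0(Y_{i-1}, \alpha|_{Y_{i-1}}) \to H^0(Y_i, \alpha|_{Y_i})$ is an isomorphism. For the inductive step, I first apply Proposition \ref{proposition:divisor_absorbs_sections} on $Y_i$ (with the ample restriction $H|_{Y_i}$) to extract an integer $s_0^{(i+1)}$ above which every integral divisor in the linear series absorbs $H^0(Y_i, \alpha|_{Y_i})$. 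Then I invoke Poonen's Bertini to select $D_{i+1} \in |s_{i+1} H|$ with $s_{i+1} \geq s_0^{(i+1)}$ satisfying (a) smoothness on $\bar X$, (b) transverse intersection with every stratum of $Z \cup D_1 \cup \cdots \cup D_i$, (c) passage through each $x_k$, and at the final step $i = d-2$ also (d) disjointness of $Y_{d-1}$ from $F$. Condition (d) is forced generically by the dimension count $\dim Y_{d-1} + \dim F \leq 1 + (d-2) < d$, so it can be imposed via Poonen's Bertini applied to the closed subscheme $F$.

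After $d-1$ steps, $\bar C := Y_{d-1}$ is a smooth projective curve contained in $\bar U$ that passes through every $x_k$; composing the restriction isomorphisms stage by stage yields $H^0(\bar X, \alpha) \cong H^0(\bar C, \alpha|_{\bar C})$. Geometric connectedness of $\bar C$ propagates through the induction from $\bar X$ via the long exact sequences associated to $0 \to \calO_{Y_{i-1}}(-D_i) \to \calO_{Y_{i-1}} \to \calO_{Y_i} \to 0$, once the $s_i$ are chosen large enough to force the Serre vanishing $H^1(Y_{i-1}, \calO_{Y_{i-1}}(-D_i)) = 0$; this in turn guarantees $H^0(Y_i, \calO_{Y_i}) = k$ at every stage.

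The main technical obstacle is ensuring that all of these Bertini-type conditions can be imposed simultaneously inside a single linear series $|s_{i+1} H|$. Each of them (smoothness on $\bar X$ and on $Y_i$, transversality with each stratum of the growing SNC divisor, passage through specified closed points, and avoidance of $F$) is determined by finitely many Taylor-coefficient conditions at a finite set of closed points, so Poonen's framework shows each has positive density in $|s H|$ as $s \to \infty$; since a finite intersection of positive-density conditions remains nonempty, a divisor $D_{i+1}$ with all the required properties exists at every step, completing the induction.
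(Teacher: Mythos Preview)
Your argument follows the same inductive strategy as the paper: cut down by smooth ample divisors one at a time, invoking Proposition \ref{proposition:divisor_absorbs_sections} for the section-absorption condition and a Bertini-type theorem for the remaining local conditions (smoothness, transversality with the SNC strata, passage through the $x_j$). The paper packages the inductive step slightly differently---it maintains at each stage that the complement of $\bar U$ in the current ambient variety still has codimension at least $2$, rather than deferring the avoidance of $F=\bar X\setminus\bar U$ to the final step---but this is only a cosmetic difference, and your added verification of geometric connectedness via $H^1(Y_{i-1},\calO_{Y_{i-1}}(-D_i))=0$ is a point the paper leaves implicit.

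One caveat on your condition (d): the dimension count $\dim Y_{d-1}+\dim F<d$ only tells you the \emph{expected} dimension of $Y_{d-1}\cap F$ is negative; it does not by itself let Poonen's theorem force $Y_{d-1}\cap F=\emptyset$, since $F$ may have positive dimension and Poonen's local conditions are imposed at finitely many closed points. What you actually need is that $Y_{d-2}\cap F$ is already \emph{finite}, so that avoiding it becomes a finite collection of nonvanishing conditions on $D_{d-1}$. To guarantee this you must also require at every earlier step that $D_{i+1}$ contains no irreducible component of $Y_i\cap F$ (another positive-density condition), so that $\dim(Y_i\cap F)\leq d-i-2$ throughout; this is exactly what the paper's ``codimension at least $2$ in $\bar D$'' phrasing encodes. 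Finally, the lemma is stated over an arbitrary base field $k$: the paper separates into $k$ infinite (classical Bertini) and $k$ finite (Poonen), whereas you only treat the finite-field case---though that is all that is used in Theorem \ref{main_thm}.
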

\begin{proof}
By induction, it suffices to construct a smooth ample divisor $\bar D\subset \bar X$ such that
\begin{itemize}
\item $\bar D\cap \bar U$ has complementary codimension at least 2 in $\bar D$;
\item $\bar D$ intersects $Z$ transversely;
\item $\bar D$ contains $x_j$, for $j=1\dots s$; and
\item the natural map $H^0(\bar{X},\alpha)\rightarrow H^0(\bar{D},\alpha|_{\bar D})$ is an isomorphism.
\end{itemize}

This is a standard application of Poonen's Bertini theorem over finite fields \cite[Theorem 1.3]{poonen2004bertini}. Fix a closed embedding $\bar X\hookrightarrow \mathbb P^m_{\Fq}$ and let $S_{\text{homog}}$ be the set of homogenous polynomials on $\mathbb P^m_{\Fq}$, as in  page 1 of \emph{loc. cit.} Consider the set $\mathcal T$ of those functions $f\in S_{\text{homog}}$ such that $\bar D:=V(f)\cap \bar X$ is a smooth ample divisor of $\bar X$ and the above four properties hold for $\bar D$. Our goal is to show that $\mathcal T$ is non-empty. 

\begin{itemize}
\item Let $\bar E:=\bar X\setminus \bar U$; by hypothesis, $\dim(\bar E)\leq n-2$. If $f\in S_{\text{homog}}$ is such that $V(f)$ does not contain any component of $\bar E$, then $\dim(V(f)\cap \bar E)\leq n-3$. For this to hold, it is sufficient that $V(f)$ avoids at least one given closed point $e_i$ on each connected component of $\bar E$. 
\item Write $\displaystyle Z=\cup^{r}_{j=1} Z_j$ to be the decomposition of $Z$ into connected components. For each $J\subset \{1,2,\dots,r\}$, set $\displaystyle Z_J:=\cap_{j\in J}Z_j$ to be the corresponding scheme-theoretic intersection. By assumption, for each $J$, $Z_J$ is a smooth subvariety of $\bar X$. The condition that $\bar D$ intersects $Z$ in good position means that $\bar D$ must intersect each stratum $Z_J$  transversely, i.e., that $Z_J\cap \bar D$ is a smooth subvariety of $\bar D$ of dimension $n-1-|J|$.

\end{itemize}
Then the positive density (and hence non-emptiness) of $\mathcal T$ immediately follows from are \cite[Theorem 1.3]{poonen2004bertini}: the conditions on $f$ are that $V(f)\cap \bar X$ intersect a finite set of smooth subvarieties transversely, avoid a given finite set of points, pass through another given finite set of points, and have sufficiently high degree by Proposition \ref{proposition:divisor_absorbs_sections}.
\end{proof}
Note that Lemma \ref{lem:good_curves_exist} also holds with $\Fq$ replaced by any infinite field $k$ by the usual Bertini theorems.
Finally, the following is surely well-known but we could not find a reference for exactly the statement we need. (The essential content is contained in \cite[Section 3.3]{chai2014complex}.) We will use this lemma to make a particular choice of $A_C\rightarrow C$ in the isogeny class from Drinfeld's Theorem \ref{Theorem:GL2} (though this choice will not be unique).
\begin{lem}\label{lem:isogeny_class}Let $X$ be a scheme and let $A\rightarrow X$ be an abelian scheme. Let $r$ be a prime and let $G$ be an $r$-divisible group on $X$. Suppose there exists an isogeny $\psi\colon A[r^{\infty}]\rightarrow G$ of $r$-divisible groups on $X$ (as in \cite[3.3.5]{chai2014complex}). Then there exists an $r$-primary isogeny $\varphi\colon A\rightarrow B$ of abelian schemes over $X$ and an isomorphism $\varepsilon\colon B[r^{\infty}]\rightarrow G$ such that the following diagram commutes.
$$
\xymatrix{
A[r^{\infty}]\ar[rd]_{\varphi[r^{\infty}]}\ar[rr]^{\psi} & & G\\
&              B[r^{\infty}]\ar[ru]_{\varepsilon}       &
}$$
\end{lem}
\begin{proof}
Set $N=\text{ker}(\psi)$. Then $N$ is a (commutative) finite flat group scheme over $X$ of $r$-primary order. We have a short exact sequence in the category of fppf sheaves:
$$0\rightarrow N\rightarrow A[p^{\infty}]\xrightarrow{\psi}  G\rightarrow 0.$$
Consider the quotient $A/N$ in the category of fppf sheaves. It follows from e.g. \cite[1.4.1.3, 1.4.1.4]{chai2014complex} that there exists an abelian scheme $B\rightarrow X$ that represents the sheaf $A/N$. We then have the following commutative diagram of fppf sheaves.
$$\xymatrix{
0\ar[r]	&N\ar[r]\ar[d]_{\cong}	&A[r^{\infty}]\ar[r]^{\psi}\ar[d]	&G\ar[r]\ar@{.>}[d]	&0\\
0\ar[r]	&N\ar[r]	&A\ar[r]					&B\ar[r]	&0
}$$
where the right vertical arrow exists because $G=\text{coker}(N\rightarrow A[r^{\infty}])$. We claim that the induced map $G\rightarrow B$ yields an isomorphism $G\rightarrow B[r^{\infty}]$. By the snake lemma, $G\rightarrow B$ is injective. However, an injective isogeny of $r$-divisible groups is an isomorphism.

\end{proof}

\section{Proofs of Theorem \ref{main_thm} and Corollaries \ref{cor:higher_dimension_drinfeld}, \ref{cor:higher_dimensional_padic_drinfeld}}

\begin{proof}[Proof of Theorem \ref{main_thm}]We proceed in several steps.

\textbf{Step 1: \emph{organizing the summands of $\mathcal E$.}} As $\calE_i$ is irreducible, has determinant $\Qpbar(-1)$, and has rank 2, the slopes of $(\calE_i)_x$ are in the interval $[0,1]$ for every closed point $x$ of $X$, see \cite[Section 1.2, p. 136-137]{drinfeld2016slopes}, where it is deduced from Corollary 1.1.7 of \emph{loc. cit.}.

Write the isotypic decomposition of $\calE$ in $\fisocd{X}_{\Qpbar}$:
$$\displaystyle \calE\cong \bigoplus_{i=1}^a(\calE_i)^{m_i}.$$

The field generated by the coefficients of $P_x(\calE,t)$ as $x$ ranges through closed points of $X$ is $\QQ$. Therefore, by \cite[E.10]{drinfeld2018pro} and either \cite[Theorem 4.2]{abe2016lefschetz} or \cite[Corollary 3.5.3]{kedlayacompanions}, we can pick an $l$ and a field isomorphism $\sigma\colon \Qpbar\rightarrow \Qlbar$ such that the semi-simple $\sigma$ companion $L$ to $\calE$ exists and in fact may be defined over $\Ql$, i.e., corresponds to a representation:
$$\pi_1(X)\rightarrow \text{GL}_N(\Ql).$$
(We emphasize that $L$ is independent of the choice of $\sigma$ by Proposition \ref{prop:shape_ell_companion}.) By compactness of $\pi_1(X)$, we may conjugate the representation into $\text{GL}_N(\ZZ_l)$. We refer to the attached lisse $\ZZ_l$-sheaf as $\tilde{L}$.  Similarly, for each $i$ we denote by $L_i$ the $\sigma$-companion to $\calE_i$ (the $L_i$ indeed do depend on the choice of $\sigma$). The companion relation commutes with direct sum; hence we have:
$$\displaystyle L\cong \bigoplus_{i=1}^a L_i^{m_i}.$$

\noindent (See also the proof of Proposition \ref{prop:shape_ell_companion}.) Let $E_i\subset \Qpbar$ denote the (number) field generated by the coefficients of $P_x(\calE_i,t)$ as $x$ ranges through the closed points of $X$. Note that for each $\calE_i$, all $p$-adic companions exist and are summands of $\calE$ by Lemma \ref{lem:all_companions}. For each $\calE_i$, set $\calF_i$ to be the sum of all distinct $p$-adic companions of $\calE_i$. Note that there are $[E_i\colon \QQ]$ distinct $p$-adic companions of $\calE_i$, parametrized by the embeddings $E_i\hookrightarrow \Qpbar$. By reordering the indices, we write the decomposition of $\calE$ as follows:

\begin{equation}\label{eqn:complete_companions_decomposition}
\displaystyle \calE\cong \bigoplus_{i=1}^b \calF_i^{m_i}
\end{equation}
for some integer $1\leq b\leq a$. (Under this reordering, the collection of $(\calE_i)_{i=1}^b$ are all mutually not companions and for each $b+1\leq j \leq a$, there exists a unique $1\leq i\leq b$ such that $\calE_j$ is a companion of $\calE_i$.) Set

\begin{equation}\label{equation:g}g=\sum_{j=1}^b m_i[E_i\colon \QQ].\end{equation}

\textbf{Step 2: \emph{the proof in a simplified situation.}} We first assume that $X$ admits a simple normal crossings compactification $\bar{X}$ such that $\calE$ extends to a logarithmic $F$-isocrystal $\bar \calE$ with nilpotent residues on $\bar{X}$ and moreover that $\tilde{L}$ has trivial residual representation. Write $Z:=\bar{X}\backslash X$ for the boundary. (Note that under the above assumption on $\calE$, the $l$-companion $L$ is tamely ramified.)

By Lemma \ref{lem:katz}, there exists a Zariski open $\bar{U}\subset \bar{X}$ with complementary codimension at least 2, and a logarithmic Dieudonn\'e crystal $(M_{\bar U},F,V)$ on $\bar{U}$ (with the logarithmic structure coming from $Z\cap \bar{U}$) such that the associated logarithmic $F$-isocrystal is isomorphic to $\bar{\calE}|_{\bar U}$. (In other words, $M_{\bar U}$ is an $F$ and $p\circ F^{-1}$ stable lattice in $\bar{\calE}|_{\bar U}$.) Let
$$(N_{\bar U},F,V):=(M_{\bar U},F,V)^4\oplus ((M_{\bar U},F,V)^t)^4,$$
where the $t$ denotes the \emph{dual logarithmic Dieudonn\'e crystal}. We also consider this logarithmic Dieudonn\'e crystal as we will need to use Zarhin's trick. We set $U:=\bar{U}\backslash (\bar{U}\cap Z)$.

After Remark \ref{rmk:hodge}, it follows that we may define Hodge line bundles $\omega_M$ and $\omega_N$ on $\bar{U}$ attached to the two logarithmic Dieudonn\'e crystals. As $\bar{U}\subset \bar{X}$ has complementary codimension at least 2 and $\bar{X}$ is smooth, it follows that $\omega_M$ and $\omega_N$ extend canonically to line bundles on all of $\bar{X}$.

The Hodge line bundle $\alpha$ on the fine moduli scheme $\mathscr{A}_{8g,1,l}\otimes \Fq$ is ample by \cite[Ch. IX, Th\'eor\`eme 3.1, p. 210]{moret1985pinceaux} or \cite[Ch. V, Theorem 2.5(i)]{faltings2013degeneration}.  Let $g$ be as in Equation \ref{equation:g} and choose an $r$ so that the $\alpha^r$ is very ample on $\mathscr{A}_{8g,1,l}$. As $8g>1$, it follows from the Koecher principle  that $H^0(\mathscr{A}_{8g,1,l}\otimes \Fq,\alpha^r)$ is a finite dimensional $\Fq$-vector space for all $r\in \mathbb{Z}$ \cite[Ch. V, Theorem 1.5 (ii)]{faltings2013degeneration}. Fix a basis $s_0,\dots,s_m$ of the vector space:

\begin{equation}\label{eqn:fix_basis}s_0,\dots,s_m\in H^0(\mathscr{A}_{8g,1,l}\otimes \Fq,\alpha^r)\end{equation}
once and for all. There is an induced embedding $\mathscr{A}_{8g,1,l}\subset \mathbb{P}^m$. As is customary, denote by $\mathscr{A}^*_{8g,1,l}$ the Zariski closure of $\mathscr{A}_{8g,1,l}$ in $\mathbb{P}^m$; we call this the \emph{minimal compactification}. Abusing notation, we also denote by $\alpha$ the Hodge line bundle on $\mathscr{A}^*_{8g,1,l}$. The Koecher principle implies that $H^0(\mathscr{A}_{8g,1,l}\otimes \Fq,\alpha^r)=H^0(\mathscr{A}^*_{8g,1,l}\otimes \Fq,\alpha^r)$: this follows from  \cite[Ch. V, Theorem 1.5 (ii), Theorem 2.5 (iii)]{faltings2013degeneration}.

It follows from \cite[Proposition 3.4]{deligne2012finitude}  there exists a \emph{finite} number of closed points $(x_j)^{s}_{j=1}$ of $U$ such that for each $\calE_i$, the field generated by the coefficients of $P_{x_j}(\calE_i,t)\in \Qpbar[t]$ as $j=1\dots s$ is $E_i\subset \Qpbar$. We call this fact $\blacklozenge$.

If $\bar{C}\subset \bar{U}$ is a good curve for the quintuple $(\bar X, X, \bar U, \omega_N^r, (x_j)^s_{j=1})$ as in Definition \ref{def:good_curve}, set $C:=\bar{C}\cap X$. Then the following three properties hold.
\begin{itemize}
\item Each $\calE_i|_C$ is irreducible by \cite[Theorem 2.6]{abe2016lefschetz}.
\item The field generated by Frobenius traces of $\calE_i|_C$ is $E_i$ by $\blacklozenge$.
\item Each $\calE_i|_C$ has infinite monodromy around $\infty$. Indeed, from the positivity of $\bar{C}$, and the good position assumption, it follows that $\bar C$ intersects each irreducible component $Z_m$ of $Z$ in a non-empty and transverse way and moreover $\bar C$ does not intersect the codimension 2 strata $Z_{m}\cap Z_n$. By assumption, for each $\calE_i$, there exists a component $Z_m$ around which the monodromy around $Z_m$ of $\calE_i$ (equivalently, of $L_i$) is infinite. On the other hand, there is a surjective morphism of tame fundamental groups $$\pi_1^{\text{tame}}(C)\twoheadrightarrow \pi_1^{\text{tame}}(X)$$ by \cite[Theorem 1.1(a)]{esnaultkindler}. Moreover, for each $m$, we may restrict the above surjection to a surjective map of tame inertia groups
$$I_{Z_m\cap \bar C}^{\text{tame}}(C)\twoheadrightarrow I_{Z_m}^{\text{tame}}(X)$$
around $Z_m\cap \bar C$ and $Z_m$ respectively. By the  assumption that $L_i$ had infinite monodromy around $Z_m$ and the fact that wild inertia is a pro-$p$ group, it follows that the image of $I_{Z_m}^{\text{tame}}(X)$ in the $l$-adic representation corresponding to $L_i$ is infinite. Therefore, the image of $I_{Z_m\cap \bar C}^{\text{tame}}(C)$ in the $l$-adic representation corresponding to $L_i|_C$ is also infinite, or equivalently, $\calE_i|_C$ has infinite monodromy around $Z_m\cap \bar C$, as desired.
\end{itemize}

 Let $\bar{C}\subset \bar{U}$ be a good curve for the quintuple $(\bar X, X, \bar U, \omega_N^r, (x_j)^s_{j=1})$. Recall the decomposition from Equation \ref{eqn:complete_companions_decomposition}: $\displaystyle \calE\cong \oplus_{i=1}^b \calF_i^{m_i}$, where each $\calF_i$ is the sum of the distinct companions of $\calE_i$ under the reordering specified in Step 1. (Note that $\calF_i$ has Frobenius traces in $\QQ$.) By Theorem \ref{Theorem:GL2} and Remark \ref{rem:mult_1}, for each $i\in \{1,\dots,b\}$, there exists an abelian scheme $A_i\rightarrow C$ of dimension $g_i=[E_i:\QQ]$ such that $\calF_i|_C$ is compatible with $A_i$. By taking the iterated fiber product over $C$, it therefore follows from Equation \ref{equation:g} that there exists an abelian scheme $\pi_C\colon A_C\rightarrow C$ of relative dimension $g$ such that
$$R^1(\pi_C)_*\Qlbar\cong L|_C.$$
As $l$ is prime to $p$, it follows from the Galois correspondence for $\pi_1(X)$ that the category of (necessarily \'etale) $l$-divisible groups on $X$ is equivalent to the category of lisse $\Zl$ sheaves on $X$ (see e.g. \cite[Pages 147-148]{chai2014complex}, where they explain that the functor is explicitly given as the Tate $l$-group). Write $\Phi$ for an inverse functor. We have assumed that the $\ZZ_l$-lattice $\tilde{L}$ has trivial residual representation, i.e., the following map is trivial
$$\pi_1(X)\rightarrow \text{GL}_{2g}(\ZZ/l\ZZ).$$
Then it follows from the Galois correspondence that $\Phi(\tilde{L})[l]$ is isomorphic to the split \'etale group scheme $(\ZZ/l\ZZ)^{2g}$ (see e.g. the explicit formula on p. 148 of \emph{loc. cit.}). On the other hand, $\Phi(\tilde{L})$ is isogenous to $A_C[l^{\infty}]$ because $\tilde{L}$ is isogenous to $T_l(A_C)$. It follows from Lemma \ref{lem:isogeny_class} that there exists an $l$-primary isogeny $A_C\rightarrow A'_C$ over $C$ such that $A'_C[l]\rightarrow C$ is isomorphic to the split \'etale group scheme $(\ZZ/l\ZZ)^{2g}$.  The abelian scheme $A'_C\rightarrow C$ therefore has a full collection of $l$-torsion sections, i.e., it has trivial $l$-torsion.  Replacing $A_C$ by $A_C'$, we may assume that $A_C$ has trivial $l$-torsion. 

Similarly, we claim that $\mathbb{D}(A_C[p^{\infty}])\otimes \Qpbar\cong \calE|_C$. Indeed, $\mathbb{D}(A_C[p^{\infty}])\otimes \Qpbar$ is a semisimple object of $\fisocd{C}_{\Qpbar}$ by \cite{pal2015monodromy} and is compatible with $L|_C$ by \cite{katzmessing}. Therefore $\mathbb{D}(A_C[p^{\infty}])$ is isogenous to $(M,F,V)_C$ as Dieudonn\'e crystals on $C$. We claim that we may replace $A_C$ by an ($p$-primarily) isogenous abelian scheme in order to ensure that:

$$\mathbb{D}(A_C[p^{\infty}])\cong (M,F,V)_C$$
as Dieudonn\'e crystals on $C$. To see this, use \cite{de1995crystalline} to construct a $p$-divisible group $G_C$ on $C$ where $\mathbb{D}(G_C)\cong (M_C,F,V)$. It follows that $A_C[p^{\infty}]$ and $G_C$ are isogenous. Applying Lemma \ref{lem:isogeny_class}, we see that there is a $p$-primary isogeny $A_C\rightarrow A'_C$ such that $A'_C[p^{\infty}]\cong G_C$. As the group of $l$-torsion points of an abelian scheme is a finite flat $l$-primary group scheme, it follows that $A'_C$ also has trivial $l$-torsion. Replace $A_C$ by $A'_C$. We emphasize that this choice of $A_C$ is not canonical!

By construction, the $l$-torsion of $A_C\rightarrow C$ is trivial; it follows that $A_C\rightarrow C$ has semistable reduction along $\bar C\cap Z$. (Use that the monodromy representation $\pi_1(C)\rightarrow GL_{2g}(\mathbb Z_l)$ has image in $\Gamma(l):=\{1+M|\ M\in lM_{n\times n}(\mathbb Z_l)\}\subset GL_{2g}(\mathbb Z_l)$, and the fact that if $l>2$, the group $(1+l\overline{\mathbb Z}_l)^{\times}$ is torsion free. Therefore, if $\gamma\in \pi_1(C)$ has quasi-unipotent image in the representation, it then in fact has unipotent image. The claim then follows from Grothendieck's semistable reduction theorem for abelian varieties.)

Let $A_{\bar C}\rightarrow \bar C$ be the N\'eron model and let $A^o_{\bar C}\rightarrow \bar C$ denote the associated semi-abelian scheme, i.e., the open subset of $A_{\bar C}\rightarrow C$ obtained by removing the non-identity components along $\bar C\setminus C$. It follows from the third part of Proposition \ref{prop:log_Hodge_compatible} that the logarithmic Dieudonn\'e crystal of $A_{\bar C}\rightarrow \bar{C}$ constructed in Remark \ref{rem:Hodge_compatible} is isomorphic to $(M,F,V)_{\bar C}$. Then by the second part of Proposition \ref{prop:log_Hodge_compatible}, the Hodge bundle of the $A^o_{\bar C}\rightarrow \bar{C}$ is isomorphic to $\omega_M|_{\bar C}$. 

Set $B_C:=(A_C\times_C A^t_C)^4$. By Zarhin's trick \cite[Chapitre IX, Lemme 1.1, p. 205]{moret1985pinceaux}, $B_C$ admits a principal polarization. By construction, we have that
\begin{itemize}
\item $B_C$ has trivial $l$-torsion, and
\item $\mathbb{D}(B_C[p^{\infty}])\cong (N_C,F,V)$
\end{itemize}
By the uniqueness part of Proposition \ref{prop:log_Hodge_compatible} it follows that there is an isomorphism of logarithmic Dieudonn\'e crystals: $$\mathbb{D}^{\log}(B_{\bar C})\cong (N,F,V)_{\bar C}.$$  The Hodge line bundle of $B^o_{\bar C}\rightarrow \bar C$ is hence isomorphic to $\omega_N|_{\bar C}$ again by Proposition \ref{prop:log_Hodge_compatible}. However, we emphasize again that the choice $B_C\rightarrow C$ is not canonical!

We have an induced morphism to a fine moduli scheme $C\rightarrow \mathscr{A}_{8g,1,l}$. This extends to a morphism
\begin{equation}\label{eqn:map_to_compactification}
\xymatrix{ C\ar[r]\ar[d] &\mathscr{A}_{8g,1,l}\ar[d]\\
\bar{C}\ar[r]			& \mathscr{A}_{8g,1,l}^*
}
\end{equation}to the minimal compactification because $\mathscr{A}_{8g,1,l}^*/\mathbb F_q$ is proper and $\bar C/\Fq$ is a smooth curve. We now claim the pullback of $\alpha$, the Hodge line bundle on $\mathscr{A}^*_{8g,1,l}$, is isomorphic to $\omega_N|_{\bar C}$. Here is the reason: choose a toroidal compactification $\bar{\mathscr A}_{8g,1,l}$. We then have a commutative diagram:
\begin{equation}
\xymatrix{
\bar C\ar[dr]\ar[r]^h & \bar{\mathscr{A}}_{8g,1,l}\ar[d]^{\varphi}\\
 & \mathscr{A}^*_{8g,1,l},
}
\end{equation}
again, because $\bar{\mathscr{A}}_{8g,1,l}/\mathbb F_q$ is proper and $\bar C/\Fq$ is a smooth curve. By \cite[Ch. V, Theorem 2.5]{faltings2013degeneration}, there is a semi-abelian scheme $G\rightarrow \bar{\mathscr{A}}_{8g,1,l}$ such that $\varphi^*\alpha$ is isomorphic to the Hodge line bundle of $G\rightarrow \bar{\mathscr{A}}_{8g,1,l}$. Now, \cite[Ch. I, Proposition 2.7]{faltings2013degeneration} implies that $h^*G$ is isomorphic to $A^o_{\bar C}\rightarrow \bar C$, i.e., the semi-abelian scheme given by the open subset of $A_{\bar C}\rightarrow \bar C$ obtained by removing the non-identity components along $\bar{C}\backslash C$. In particular, it follows from part (2) of Proposition \ref{prop:log_Hodge_compatible} that the Hodge line bundle of $h^*G$ is compatible with the Hodge line bundle constructed in Remark \ref{rmk:hodge}.

In Equation \ref{eqn:fix_basis}, we have already fixed a basis of sections $$s_0,\dots,s_m \in H^0(\mathscr{A}_{8g,1,l}\otimes \Fq,\alpha^r)=H^0(\mathscr{A}^*_{8g,1,l}\otimes \Fq,\alpha^r);$$

after pulling back the sections to $\bar{C}$ via Equation \ref{eqn:map_to_compactification}, we obtain an $m+1$-tuple of sections $t_0,\dots,t_m$ in $H^0(\bar{C},\omega^r_N|_{\bar C})$ that define the morphism $\bar{C}\rightarrow \mathscr{A}^*_{8g,1,l}\subset \mathbb{P}^m$.

In conclusion, for every good curve $\bar{C}\subset \bar{U}$ for the quintuple $(\bar X, X, \bar U, \omega_N^r,(x_j)^s_{j=1})$, we have constructed an $m+1$-tuple of globally generating sections $t_0,\dots,t_m\in H^0(\bar{C},\omega^r_N|_{\bar C})$ such that
\begin{itemize}
\item the induced map lands in $\mathscr{A}^*_{8g,1,l}\subset \mathbb{P}^m$;
\item the image of $C$ under the induced map lands in $\mathscr{A}_{8g,1,l}\subset \mathscr{A}_{8g,1,l}^*$;
\item and such that the induced abelian variety on $B_C\rightarrow C$ is isomorphic to $(A_C\times_C A_C^t)^4$ where $A_C\rightarrow C$ is an abelian scheme with $\mathbb{D}(A_C[p^{\infty}])\cong (M,F,V)|_C$ as Dieudonn\'e crystals on $C$. (Therefore we also have that $\mathbb{D}(B_C[p^{\infty}])\cong (N,F,V)|_C$.)
 \end{itemize}
 In particular, setting $M=\mathscr{A}_{8g,1,l}^*\subset \mathbb P^m$, condition (3) of Lemma \ref{lem:map_on_curves} holds for $\bar{C}\subset \bar{X}$ (corresponding to the symbols $C\subset Y$ in Lemma \ref{lem:map_on_curves}). Note that for two such good curves $C$ and $C'$, there is no reason that the induced maps to $\mathscr{A}_{8g,1,l}$ match up on the intersection $C\cap C'$ because our choices of abelian schemes were not canonical.

For each $n>0$, let $P_n$ denote the union of the set of closed points of $U$ whose residue field is contained in $\FF_{q^{n!}}$. Note that for any infinite subset $S\subset \mathbb N$, the set $\bigcup_{n\in S}P_n$ is Zariski dense in $X$; indeed, any given closed point $x$ of $U$ is an element of $P_n$ for all $n\gg 0$. By Lemma \ref{lem:good_curves_exist}, it follows that for each $n>0$, there exists a good curve $\bar C_n\subset \bar U$ for the quintuple $(\bar X, X, \bar U, \omega_N^r, P_n)$. 

For each $n\in \mathbb{N}$, by the above remarks we obtain an $m+1$-tuple of globally generating sections $$t_{n,0},\dots, t_{n,m}\in H^0(\bar{C}_n,\omega^r_N|_{\bar{C}_n})$$
such that the induced map factors $f_n\colon \bar{C}_n\rightarrow \mathscr{A}^*_{8g,1,l}\subset \mathbb{P}^m$. Moreover, any infinite subcollection of the $\bar{C}_n$ is Zariski dense because they are space-filling: if $x$ is a closed point of $U$ with residue field $\mathbb F_{q^e}$, then $x$ is contained in $C_n$ for all $n\geq e$. By Lemma \ref{lem:map_on_curves}, it follows that there exists an infinite set $S\subset \mathbb{N}$ and sections $\tilde{t}_0,\dots,\tilde{t}_m\in H^0(\bar{X},\omega^r_N)$ such that the induced rational map $\tilde{f}\colon \bar{X}\dashrightarrow \mathbb{P}^m$ lands in $\mathscr{A}^*_{8g,1,l}$ and moreover, for each $n\in S$, we have an equality of morphisms $\tilde{f}|_{\bar{C}_n}=f_n$.

By shrinking $U$, we therefore obtain a map $\tilde{f}\colon U\rightarrow \mathscr{A}_{8g,1,l}$ and hence an abelian scheme $B_U\rightarrow U$ such that $B_U[l]$ is a trivial \'etale cover of $U$. The maps $f_n\colon \bar{C}_n\rightarrow \mathscr{A}_{8g,1,l}^*$ were all constructed such that the induced abelian scheme $B_{C_n}\rightarrow C_n$ is compatible with $$(N_{C_n},F,V)\otimes \Qp\cong (\calE\oplus \calE^*(-1))^4|_{C_n}.$$ On the other hand, if $u$ is a closed point of $U$, then $u$ lies on $C_n$ for all $n\gg 0$. We claim that it follows that $B_U\rightarrow U$ is compatible with $(L\oplus L^*(-1))^4|_U$. Indeed, it suffices to show that for every closed point $u$ of $U$, $B_u\rightarrow u$ and $(L\oplus L^*(-1))^4|_u$ are compatible, i.e., that the characteristic polynomials of Frobenius match up. Pick $n\in S$ with $C_n$ containing $u$. As the map $\tilde{f}\colon U\rightarrow \mathscr{A}_{8g,1,l}$ extends the map $f_n\colon C_n\rightarrow \mathscr{A}_{8g,1,l}$ by the definition of $S$ in Lemma \ref{lem:map_on_curves}, the induced abelian scheme $B_U\rightarrow U$ extends the abelian scheme $B_{C_n}\rightarrow C_n$ constructed above, which is compatible with $(L\oplus L^*(-1))^4|_{C_n}$ by construction. Therefore $B_u\rightarrow u$ is compatible with $(L\oplus L^*(-1))^4|_{u}$, as desired.

For each $n\in S$ we have that $\tilde{f}|_{\bar{C}_n}=f_n$. By construction there exists an abelian scheme $A_{C_n}\rightarrow C_n$ of dimension $g$ with $$B_U|_{C_n}\cong (A_{C_n}\times_{C_n}A^t_{C_n})^4.$$ Consider the map of representations induced by the first $\ZZ_l$-cohomology of the abelian schemes $B_U\rightarrow U$ and $B_U|_{C_n}\rightarrow C_n$:
\begin{equation}\label{equation:reps_same_image}
\xymatrix{\pi_1(C_n)\ar[dr]\ar[rr] && \pi_1(U)\ar[dl]\\
 & \text{GL}_{16g}(\ZZ_l).&
}
\end{equation}
Then \cite[Lemma 6(b)]{katz2001spacefilling} implies that for $n\gg 0$, the two representations have the same image (which lands in $\text{GL}_{2g}(\ZZ_l)^8$). By the fundamental work of Tate-Zarhin on Tate's isogeny theorem for abelian varieties over finitely generated fields of positive characteristic \cite[Ch. XII, Th\'eor\`eme 2.5(i), p. 244]{moret1985pinceaux}, it follows that for all $n\gg 0$ the natural injective map $\text{End}_U(B_U)\hookrightarrow \text{End}_{C_n}({B_U|_{C_n}})$ is an isomorphism when tensored with $\ZZ_l$ and hence also when tensored with $\QQ_l$. Therefore, for all $n\gg 0$, the map $$\text{End}_U(B_U)\otimes \QQ\rightarrow \text{End}_{C_n}(B_U|_{C_n})\otimes \QQ$$ is an isomorphism as both sides are finite dimensional semi-simple $\QQ$-algebras of the same rank.

We know that $\text{End}_{C_n}(B_{U}|_{C_n})$ has a \emph{nontrivial idempotent} $e_{C_n}$ that projects onto a copy of $A_{C_n}$. After replacing $e_{C_n}$ by a high integer multiple, we may lift $e_{C_n}$ to $e_U\in \text{End}_U(B_U)$. Set the image of $e_U$ to be the abelian scheme $\pi_U\colon A_U\rightarrow U$. Finally, we claim that $A_U$ is compatible with $L$ (equivalently: $\calE$). Indeed, the image $A_U\rightarrow U$ is an abelian scheme of dimension $g$ that extends $A_{C_n}\rightarrow C_n$. On the other hand, in Equation \ref{equation:reps_same_image} the two images in $\text{GL}_{16g}(\Zl)$ are the same (as we have assumed $n\gg 0$) and hence have corresponding decompositions in irreducible $\Ql$ representations.

\textbf{Step 3: \emph{the proof in the general case via reduction to Step 2}.} There exists a projective divisorial compactification $\bar{X}$ of $X$. (This means that $\bar{X}$ is normal and the boundary is an effective Cartier divisor.) By Kedlaya's semistable reduction theorem (see \cite[Theorem 7.6]{kedlaya2016notes} for a meta-reference), there is a generically \'etale alteration $\varphi\colon X'\rightarrow X$ together with a simple normal crossings compactification $\bar{X'}$ such that the overconvergent pullback $\calE'$ extends to a logarithmic $F$-isocrystal with nilpotent residues.  After replacing $X'$ with a further finite \'etale cover, we may guarantee that the residual representation of $L'$ is trivial.

%

 We have proven the theorem for $\calE'$ on $X'$: there exists an open subset $W'\subset X'$ and an abelian scheme $A_{W'}\rightarrow W'$ of relative dimension $g$ with $\mathbb{D}(A_{W'}[p^{\infty}])\cong \calE'|_{W'}$. After shrinking $W'$ and $W$, we may assume that $\varphi|_{W'}\colon W'\rightarrow W$ is finite \'etale, of degree $d$.

Set $B_W:=\mathfrak{Res}^{W'}_{W}(A_{W'})$ to be the Weil restriction of scalars. This is an abelian scheme over $W$ of dimension $dg$. We claim that $B_W$ is compatible with $L^{d}$. One way to see this is the following. Consider the short exact sequence of abelian sheaves in the \'etale topology:

$$0\rightarrow A_{W'}[l^n]\rightarrow A_{W'}\xrightarrow{l^n}A_{W'}\rightarrow 0.$$
As $W'\rightarrow W$ is finite \'etale, Weil restriction is exact on the level of abelian \'etale sheaves. Therefore $\mathfrak{Res}^{W'}_W(A_{W'}[l^n])\cong B_W[l^n]$. As $A_{W'}[l^n]$ is a finite \'etale group over $W$, one deduces that the representation associated to $B_W$ is isomorphic to

$$\text{Ind}_{\pi_1(W)}^{\pi_1(W')}L'.$$
However, $L'$, as a representation, is the restriction of $L$ along the inclusion $\pi_1(W')\hookrightarrow \pi_1(W)$. Then the desired compatibility follows from the following fact: if $H\subset G$ is the inclusion of a subgroup of finite index $d$, and if $V$ is a finite dimensional representation of $G$, then $$\text{Ind}_H^G\text{Res}^G_H(V)\cong V^{\oplus d}.$$ 
Recall that we wrote an isotypic decomposition:
$$\displaystyle L\cong \bigoplus_{i=1}^a (L_i)^{m_i}$$
where each $L_i$ is irreducible on $X$ (and hence on $W$ by \cite[Lemma 1.1.2]{kedlayacompanions}). Let $E_i\subset \Qlbar$ denote the field generated by the traces of Frobenius on $L_i$ as $x$ ranges through the closed points of $W$. 
We claim that we may find a smooth curve $C\subset W$ with the following properties:

\begin{enumerate}
\item each $L_i|_C$ is irreducible;
\item the field generated by Frobenius traces of $L_i|_C$ is $E_i\subset \Qlbar$;
\item each $L_i|_{C}$ has infinite monodromy around $\infty$; and
\item the induced monodromy representations coming from $B_W\rightarrow W$ and $B_W|_C\rightarrow C$ \[
\xymatrix{\pi_1(C)\ar[dr]\ar[rr] && \pi_1(W)\ar[dl]\\
 & \text{GL}_{2gd}(\ZZ_l)&
}
\]
have the same image.
\end{enumerate}

We have a projective normal compactification $\bar{X}$ of $X$, which is smooth away from a closed subset of codimension at least 2. Let $F=\bar{X}\backslash X$ and let $F'\subset F$ be the singular locus of $\bar{X}$. For each $L_i$, there is an irreducible component $F_j$ of $F$ that witnesses the fact that $L_i$ has infinite monodromy at $\infty$: having infinite monodromy at $\infty$ means that a certain inertia group has infinite image in the representation.

Pick a closed point $y_j\in F_j\backslash (F_j\cap F')$ for each $j$. Then, by using \cite[C.2]{drinfeld2012conjecture}, we may construct an infinite set of curves $(C_n)_{n\in \NN}$ where each $C_n\subset W$ is a smooth, geometrically connected curve that contains all closed points of $W$ whose residue fields are contained in $\FF_{q^{n!}}$ and that pass through the $y_j$ transversally (i.e., with a tangent direction that is not contained in $F_j$). (We remark that this is a consequence of Poonen's Bertini theorem \cite[Theorem 1.3]{poonen2004bertini}.)

Each $L_i|_{C_n}$ has infinite monodromy around $\infty$. By \cite[Lemma 6(b)]{katz2001spacefilling}, it follows that for all $n\gg 0$, $C_n$ satisfies (4). For $n\gg 0$, \cite[Lemma 6(b)]{katz2001spacefilling} and \cite{deligne2012finitude} guarantees that setting $C:=C'_n$ satisfies the above four conditions.

Again, by using Drinfeld's Theorem \ref{Theorem:GL2}, Remark \ref{rem:mult_1}, and Equation \ref{eqn:complete_companions_decomposition} as in Step 2, there exists an abelian scheme $A_{C}\rightarrow C$ that is compatible with $L|_{C}$. On the one hand, using the Tate isogeny theorem \cite[Ch. XII, Th\'eor\`eme 2.5]{moret1985pinceaux} it follows that $A^d_C$ is isogenous to $B_W|_C$. On the other hand, another application the Tate isogeny theorem together with property (4) of $C$ implies that the natural map:
$$\text{End}_{W}(B_W)\rightarrow \text{End}_{C}(B_W|_C)$$
is an isomorphism after tensoring with $\QQ$. As $B_W|_C$ is isogenous to $A_C^d$, it follows that $\text{End}_{C}(B_W|_C)\otimes \QQ$ has an element $e_{C}$ projecting onto a factor of $A_C$. After replacing $e_C$ with a high integer multiple, we may lift to $e_W\in \text{End}_W(B_W)$. Set the image of $e_W$ to be the abelian scheme $A_W\rightarrow W$; this is compatible with $L|_W$, as desired.
\end{proof}
\begin{proof}[Proof of Corollary \ref{cor:higher_dimension_drinfeld}]
Suppose there exists $\pi_U\colon A_U\rightarrow U$ such that $R^1(\pi_U)_*\Qlbar$ has $L_1$ as a summand. By the assumption that $X$ is smooth and geometrically connected, it is irreducible; hence $U\subset X$ is dense. A theorem of Zarhin implies that $R^1(\pi_U)_*\Ql$ is semi-simple \cite[Chapitre XII, Theorem 2.5, p. 244-245]{moret1985pinceaux}. The field generated by the characteristic polynomials of $R^1(\pi_U)_*\Ql$ is clearly $\QQ$; indeed, this follows Weil's theorem that the characteristic polynomial of Frobenius acting on the Tate module of an abelian variety over a finite field has coefficients in $\mathbb Z$ \cite[IX,X]{weilAV}.

We claim that $\mathbb{D}(A_U[p^{\infty}])\otimes \Qp$ is a semi-simple object of $\fisocd{U}$. This is essentially contained in \cite[Remark 4.11]{pal2015monodromy}, but some comments are in order.

 While the statement of Remark 4.11 of \emph{loc. cit.} assumes that $U$ is a smooth curve, this assumption is unnecessary. Indeed, the only point where this assumption is used is in the citation to \cite[4.3-4.8]{katotrihan}, to argue that the associated $F$-isocrystal is overconvergent. However, \cite[Th\'eor\`eme 7]{etesse} essentially states and proves exactly this: if $S/k$ is a smooth separated scheme over a field $k$ of characteristic $p$ and $A\rightarrow S$ is an abelian scheme, then $R^1f_{\text{rig}}(\mathcal O_{X/K})$ is an overconvergent $F$-isocrystal on $S$. When $k$ is perfect, this $F$-isocrystal is isomorphic to $R^1f_{\text{crys}}(\mathcal O_{X/W})\otimes \mathbb Q$ because $A\rightarrow S$ is smooth and proper (see e.g. \cite[Proposition 1.9]{berthelot1997finitude}). On the other hand,  $R^1f_{\text{crys}}(\mathcal O_{X/W})\cong \mathbb D(A[p^{\infty}])$. In particular, to obtain the semisimplicity, one simply combines Corollary 4.9 and Proposition 3.5 of \cite{pal2015monodromy} with the fact that $\mathbb D(A_U[p^{\infty}])\in \fisocd{U}$, exactly as explained in Remark 4.11 of \emph{loc. cit.}
 
As $\mathbb{D}(A_U[p^{\infty}])\otimes \Qp$ is isomorphic to the rational crystalline cohomology of $A_U\rightarrow U$, it follows from \cite{katzmessing} that that $\mathbb{D}(A_U[p^{\infty}])\otimes \Qp$ and $R^1(\pi_U)_*\Ql$ are companions.  It follows from Lemma \ref{lem:all_companions} that all crystalline companions of $L_1|_U$ exist and moreover are summands of $\mathbb{D}(A_U[p^{\infty}])\otimes \Qpbar$. Then by \cite[Corollary 3.3.3]{kedlayacompanions}, all crystalline companions to $L_1$ exist.

Conversely, suppose all crystalline companions $(\calE_i)^b_{i=1}$ to $L_1$ exist. We first of all claim that each $\calE_i$ has infinite monodromy at $\infty$. Indeed, suppose for contradiction that there existed an alteration $f\colon X'\rightarrow X$ and a compactification $\overline{X'}$ such that $f^*\calE_i$ extends to an object $\calF'$ of $\fisocd{\overline{X'}}_{\Qpbar}$. Then $f^*L_1$ would also extend to $\overline{X'}$ by \cite[Corollary 3.3.3]{kedlayacompanions}, contradicting the assumption that $L_1$ had infinite monodromy at $\infty$. Moreover, each $\calE_i$ is irreducible by \cite[Lemma 3.3.1]{kedlayacompanions}. Similarly, tensorial operations respect the companion relation, hence $\det(\calE_i)\cong \Qpbar(-1)$. There exists a $p$-adic local field $K$ with each $\calE_i$ an object of $\fisocd{X}_{K}$. Set $\calE:=\bigoplus_{i=1}^b
\calE_i$, considered as an object of $\fisocd{X}$ (by restricting scalars from $K$ to $\Qp$, so the rank of $\calE$ is $2b[K:\QQ]$). Note that $\calE$, being the sum of irreducible objects, is semisimple. Then $\calE$ satisfies the hypotheses of Theorem \ref{main_thm}, and moreover $L_1$ is a companion of a summand of $\calE$. It follows that there is an open set $U\subset X$ together with an abelian scheme $\pi_U\colon A_U\rightarrow U$ such that $\calE\cong \mathbb{D}(A_U[p^{\infty})\otimes \Qp$. Again using Zarhin's semi-simplicity, $L_1|_U$ is a summand of $R^1(\pi_U)_*\Qlbar$, as desired. \end{proof}

\begin{proof}[Proof of Corollary \ref{cor:higher_dimensional_padic_drinfeld}]
Under the assumption on $E_1$, all $p$-adic companions to $\calE_1$ exist by \cite[Corollary 4.16]{krishnamoorthypal2018}. (This result is straightforward; they are all Galois twists of each other.) Fix $\sigma\colon \Qpbar \rightarrow \Qlbar$. Then the $\sigma$-companion to $\calE_1$ exists by \cite[Theorem 4.2]{abe2016lefschetz} or \cite[Corollary 3.5.3]{kedlayacompanions}. Apply Corollary \ref{cor:higher_dimension_drinfeld}.
\end{proof}

\appendix
\section{Logarithmic $F$-crystals}\label{section:log}
We first recall the notion of a logarithmic $F$-crystal/isocrystal. While this notion is due to Kato \cite[Section 6]{kato1989log}, our treatment is copied from recent work of Kedlaya. 
\begin{defn}A \emph{smooth pair} over a perfect field $k$ is a pair $(Y,Z)$ where $Y/k$ is a smooth variety and $Z\subset Y$ is a strict normal crossings divisor.
\end{defn}
\begin{defn}Let $(Y,Z)$ be a smooth pair over a perfect field $k$ of characteristic $p>0$. A \emph{smooth chart} for $(Y,Z)$ is a sequence of elements $\bar{t}_1,\dots,\bar{t}_n$ of elements of $\calO_Y(Y)$ such that the
\begin{itemize}
\item induced map $\bar{f}\colon Y\rightarrow \mathbb{A}^n$ is \'etale, and
\item there exists an $m\in [1,n]$ such that the zero-loci of $\bar{t_i}$, for $i=1\dots m$, are exactly the irreducible components of $Z$.
\end{itemize}
\end{defn}
Smooth charts exist Zariski locally on smooth pairs (in characteristic $p$) by \cite[Theorem 2]{kedlayaetaleaffine}.  Let $(Y,Z)$ be a smooth pair over a perfect field $k$ of characteristic $p>0$. Let $\bar{t}_1,\dots,\bar{t}_n$ be a smooth chart of $(Y,Z)$. Let $P_0$ be the formal scheme given by the formal completion of $W(k)[t_1,\dots,t_n]$ along $(p)$. By topological invariance of the \'etale site, there exists a unique smooth formal scheme $P$ together with an \'etale morphism $f\colon P\rightarrow P_0$ lifting $\bar{f}$. We call the pair $(P,t_1,\dots,t_n)$ the \emph{lifted smooth chart} of $(Y,Z)$ associated to the original chart.

Let $\sigma_0\colon P_0\rightarrow P_0$ be the Frobenius lift with $\sigma^*(t_i)=t_i^p$ for $i\in [1,\dots,n]$. Then there exists an associated Frobenius lift $\sigma\colon P\rightarrow P$.
\begin{defn}\label{def:log_f_crystal}Let $(Y,Z)$ be a smooth pair over a perfect field $k$ and let  $\bar{t}_1,\dots,\bar{t}_n$ be a smooth chart of $(Y,Z)$. Keep notations as above. A \emph{logarithmic crystal with nilpotent residues} on $(Y,Z)$ is a pair $(M,\nabla)$ where 
\begin{itemize}
\item $M$ is a $p$-torsion free coherent module over $P$; and
\item $\nabla$ is an integrable, topologically quasi-nilpotent connection on $M$ (with respect to $W(k)$) with logarithmic poles and nilpotent residues along the zero-loci of $f^*(t_i)$ for $i\in 1,\dots,m$.
\end{itemize} A \emph{logarithmic $F$-crystal with nilpotent residues} is a triple $(M,\nabla,F)$ where $(M,\nabla)$ is a logarithmic crystal with nilpotent residues and $F$ is an injective, horizontal morphism  $$F\colon \sigma^*(M)\rightarrow M$$ of coherent $P$-modules. A \emph{logarithmic Dieudonn\'e crystal with nilpotent residues} is a quadruple $(M,\nabla,F,V)$ where $(M,\nabla,F)$ is a logarithmic $F$-crystal in finite, locally free modules with nilpotent residues and $V$ is an injective, horizontal map
$$V\colon (M)\rightarrow \sigma^* M$$
such that $FV=VF=p$.
\end{defn}
\begin{rem}In the definition of a logarithmic $F$-crystal with nilpotent residues, we do not demand that $M$ is locally free. However, in our definition of a logarithmic Dieudonn\'e crystal, we do demand that the underlying logarithmic crystal is locally free.
\end{rem}

This definition extends to general smooth pairs by Zariski gluing; every smooth pair admits a finite open covering on which the restriction admits a smooth chart. We often drop the connection $\nabla$ from the notation and write a logarithmic $F$-crystal as $(M,F)$.

There is a natural category of logarithmic crystals with nilpotent residues on $(Y,Z)$ (where morphisms are $P$-linear and horizontal), and the category of logarithmic isocrystals with nilpotent residues is defined to be the induced isogeny category. One similarly defines the category of  logarithmic $F$-isocrystals with nilpotent residues.

\begin{rem}\label{rem:nilpotent}Part of the definition of a logarithmic $F$-crystal $(M,\nabla,F)$ in Definition \ref{def:log_f_crystal} explicitly assumes that the residues of the underlying crystal $(M,\nabla)$ were nilpotent. This assumption is indeed superfluous; we now exlain why.

First of all, the associated logarithmic isocrystal to $(M,\nabla)\otimes \QQ$ to $(M,\nabla,F)$ is a convergent logarithmic isocrystal: indeed, a logarithmic isocrystal is convergent if and only if it is infinitely Frobenius divisible, see \cite[Remark 16]{ogus1995constant}, the argument of which is just a logarithmic variant of \cite[2.18]{ogus1984f}. (See \cite[Section 2.4]{berthelot1996cohomologie} or \cite[Remark 2.4]{esnaultshiho2018} for several other perspectives in the non-logarithmic setting.) Then it is a general fact that a convergent logarithmic $F$-isocrystal has nilpotent residues, see e.g. \cite[Definition 7.2]{kedlaya2016notes}.
\end{rem}
\begin{rem}Let $(Y,Z)$ be a smooth pair over $k$ and let $U=Y\backslash Z$. We denote by $\underline{Y}$ the (fine, saturated) logarithmic scheme given by $(Y,\alpha\colon \calO_U^*\hookrightarrow \calO_Y)$. Then our definition of a logarithmic crystal is compatible with the definition of Kato (see \cite[Theorem 6.2]{kato1989log}), our definition of a logarithmic $F$-crystal in finite, locally free modules is compatible with the definition of Kato-Trihan (see \cite[4.1]{katotrihan}) and our definition of a logarithmic $F$-isocrystal is compatible with the definition given by Shiho (see \cite[Definition 4.1.3]{shiho2000crystalline}).
\end{rem}
The mathematical content of the following lemma is essentially \cite[Theorem 2.6.1]{katz1979slope} (and relatedly \cite[Lemma 2.5.1]{crew1987bowdoin}); we have simply rewritten Katz's argument to the logarithmic setting. The key is that Katz's slope bounds holding on the open subset where the logarithmic structure is trivial guarantee that they hold everywhere. We use Kato's definition of logarithmic $F$-crystals only for convenience to discuss global objects; all of the computations use the local definitions given above.
\begin{lem}\label{lem:katz}Let $(Y,Z)$ be a smooth pair over a perfect field $k$ of positive characteristic and let $U:=Y\backslash Z$. Let $\calE$ be a logarithmic $F$-isocrystal on $(Y,Z)$.
\begin{enumerate}
\item Suppose the Newton slopes of $\calE_U$ are all non-negative. Then there exists an open subset $W\subset Y$, whose complementary codimension is at least $2$, and a logarithmic $F$-crystal in finite, locally free modules $(M'',F)$ on the smooth pair $(W,W\cap Z)$ such that $(M'',F)\otimes \QQ\cong \calE_W$.
\item Suppose the Newton slopes of $\calE_U$ are in the interval $[0,1]$. Then there exists an open subset $W\subset Y$, whose complementary codimension is at least $2$, and a logarithmic Dieudonn\'e crystal in finite, locally free modules $(M'',F,V)$ on the smooth pair $(W,W\cap Z)$ such that $(M'',F)\otimes \QQ\cong \calE_W$.
\end{enumerate}
\end{lem}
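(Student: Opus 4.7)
The plan is to adapt Katz's argument for \cite[Theorem 2.6.1]{katz1979slope} to the logarithmic setting essentially verbatim, exploiting lower semicontinuity of the Newton polygon: if the slopes of $\calE_U$ are non-negative at every closed point of $U$, then by semicontinuity (specialization only raises the Newton polygon) they are non-negative at every closed point of $Y$, and analogously for the upper slope bound needed in part (2). This is what the hint in the lemma statement is referring to when it says that Katz's slope bounds on $U$ ``guarantee that they hold everywhere.''

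For part (1), work locally on a smooth chart of $(Y,Z)$ with associated lifted smooth formal scheme $P$ and Frobenius lift $\sigma$; by Zariski glueing it suffices to construct the lattice on each chart. Choose an initial coherent, $p$-torsion-free $P$-submodule $M \subset \calE$ with $M \otimes \QQ = \calE$, and after removing a closed subset of $Y$ of codimension at least $2$ on which $M$ fails to be locally free (using regularity of $P$), assume $M$ is finite locally free. Form Katz's iterated sum $M_n := \sum_{i=0}^n F^i(\sigma^{i*} M)$, which is an increasing sequence of coherent $P$-submodules of $\calE$. The pointwise non-negativity of slopes throughout $Y$, combined with Katz's bound, produces a uniform constant $c$ with $M_n \subset p^{-c} M$ for all $n$; hence $M'' := \bigcup_n M_n$ stabilizes as a coherent, $F$-stable lattice. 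A final shrink of codimension at least $2$ ensures $M''$ is finite locally free, and the logarithmic connection $\nabla$ and Frobenius $F$ descend by functoriality to equip $(M'', F)$ with the structure of a logarithmic $F$-crystal with nilpotent residues.

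For part (2), the upper slope bound applied to the dual form of Katz's iteration yields the further containment $p M'' \subset F(\sigma^* M'')$ on $P$ (after possibly modifying $M''$ via the analogous iterative construction for the Frobenius $pF^{-1}$). Defining $V := p F^{-1} \colon M'' \to \sigma^* M''$ then gives $FV = VF = p$, and $(M'', F, V)$ is the desired logarithmic Dieudonn\'e crystal in finite locally free modules.

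The main technical obstacle is verifying that Katz's pointwise bound upgrades to a uniform coherent-module containment $M_n \subset p^{-c} M$ valid on all of $P$ (not just on $U$). This is precisely the content of the hint in the lemma statement: semicontinuity of the Newton polygon transfers the slope hypothesis from $U$ to every closed point of $Y$, after which Katz's argument applies uniformly on the smooth formal base $P$. Everything else --- the existence of an initial lattice, the codimension-at-least-$2$ shrinks, and the passage from the $F$-crystal construction to the Dieudonn\'e crystal refinement via $V$ --- is routine once this semicontinuity step is in place.
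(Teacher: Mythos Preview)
Your overall strategy is the same as the paper's (Katz's iterative sum, then pass to a locally free lattice), but you handle the crucial extension step by a different mechanism and have misread the hint. The paper does \emph{not} invoke semicontinuity of the Newton polygon. It applies Katz's slope estimates only on $U$, where the log structure is trivial and \cite[p.~151--152]{katz1979slope} applies verbatim, obtaining $F^n((\sigma^n)^*M_U)\subset p^{-\nu}M_U$ for all $n$; it then notes that since $U\cap T$ is dense in each chart $T$, this containment of coherent submodules of $M_A[1/p]$ persists over all of $T$. Thus the paper extends the \emph{output} of Katz's argument (a module inclusion) by density, whereas you extend the \emph{input} (the slope hypothesis) by Grothendieck--Katz specialization and then rerun Katz on the full logarithmic base. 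Your route is also valid once one observes that (i) the Newton polygon at a point of $Z$ depends only on the underlying $\sigma$-module, so semicontinuity is unaffected by the log structure, and (ii) Katz's argument on p.~151--152 uses only the $F$-span structure, not the connection, so it applies over the log lift $P$ unchanged; the paper's route avoids having to make either remark. Two cosmetic differences: the paper keeps the initial coherent lattice, forms $M'=\sum_n F^n((\sigma^n)^*M)$, and only then takes the double dual $M''=(M')^{**}$ before shrinking, whereas you shrink first; and for part~(2), after iterating with $V=pF^{-1}$, the paper explicitly records that the resulting module remains $F$-stable because $FV=VF=p$, a point your sketch leaves implicit.
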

\begin{proof}By definition of a logarithmic $F$-isocrystal, there exists a logarithmic crystal in coherent (not necessarily locally free!) modules $M$ and a map $F\colon Frob_{\underline Y}^* M\rightarrow M\otimes \QQ$ that is isomorphic to $\calE$ when thought of as a logarithmic $F$-isocrystal. Here, $Frob_{\underline Y}$ refers to the absolute Frobenius (on the f.s. log scheme $\underline{Y}$ induced from the smooth pair $(Y,Z)$) and the $*$ refers to pullback on the logarithmic crystalline topos. This is compatible with our above definitions.

As $M$ is a logarithmic crystal in \emph{coherent} modules, there exists a non-negative integer $\nu$ so that

$$F\colon (Frob_{\underline Y})^*M\rightarrow p^{-\mu}M.$$
We have assumed that the Newton slopes of $\calE$ are all non-negative. Slope bounds of Katz (see the proof on \cite[p. 151-152]{katz1979slope}) imply then that there exists a non-negative $\nu$ such that all $n\geq 0$,
\begin{equation}\label{equation:katz_slope_bounds}\displaystyle F^n\colon (Frob^n_U)^*M_U\rightarrow p^{-\nu}M_U.
\end{equation}

We explicate this in local coordinates. Take an affine open neighborhood $T\subset Y$ such that $(T,T\cap Z)$ has a smooth chart $(\bar{t}_1,\dots,\bar{t}_n)$. Let $(P,t_1,\dots,t_n)$ be the associated lifted smooth chart; note that $P=\text{Spf}(A)$ where $A$ is a noetherian $W(k)$ algebra equipped with the $p$-adic topology. Then the logarithmic crystal yields a finitely generated $A$ module $M_A$ and the Frobenius structure induces a continuous, $A$-linear homomorphism $F\colon \sigma^*M_A\rightarrow p^{-\mu}M_A$. 


As $U\cap T\subset T$ is open dense, it follows from Equation \ref{equation:katz_slope_bounds} that $$\displaystyle F^n \colon (\sigma^n)^*M_A\rightarrow p^{-\nu}M_A.$$. By varying $T$, one deduces that $F^n\colon (Frob^n_{\underline Y})^*M\rightarrow p^{-\nu}M$ for our fixed $\nu$ as above and for all $n\geq 0$.

Consider the module
$$\displaystyle M_A':=\sum_{n\geq 0}F^n\left((\sigma^n_X)^*M_A\right)\subset p^{-\nu}M_A.  $$
As $A$ is noetherian, $M_A'$ is finitely generated, being a submodule of a finitely generated module. Moreover, $M_A'$ is stable under $F$. Finally, $M_A'$ is the finite sum of (logarithmic) horizontal submodules. Therefore the pair $(M_A',F)$ is in fact a logarithmic $F$-crystal in coherent modules. We have an isomorphism $(M_A',F)\otimes \QQ\cong \calE_T$ in the category of logarithmic $F$-isocrystals with nilpotent residues on $(T,Z\cap T)$.
 
Now, set $M_A'':=(M_A')^{**}$. This is a coherent reflexive sheaf on the ring $A$, and hence is locally free away on an open set of $\Spec(A)$ whose complement has codimension at least 3 \cite[0AY6]{stacks-project}. $M_A''$ is manifestly stable under the connection and $F$. In particular, we can find an open subset $T''\subset T$ with complementary codimension at least 2 such that the logarithmic $F$-crystal $(M_A'',F)_{T''}$ is a crystal in finite, locally free modules.

After initially choosing a pair $(M,F\colon Frob_{\underline{Y}}^*M\rightarrow p^{-\mu}M)$ representing $\calE$, the constructions we have made are canonical. Therefore, ranging over $T$, we may glue the $(M'',F)_{T''}$; that is, there is an open subset $W\subset T$ with complementary codimension at least 2 and a logarithmic $F$-crystal $(M'',F)_W$ in finite, locally free modules on the smooth pair $(W,Z\cap W)$ that is a lattice inside of $\calE_W$. 

We now indicate how to complete the result if the Newton polygons on $U$ are in the interval $[0,1]$. Let $(M,F)$ be a logarithmic $F$-crystal in finite, locally free modules on a smooth pair $(Y,Z)$ over a perfect field $k$ and suppose the Newton slopes on $U$ are no greater than 1. Set $V:=F^{-1}\circ p$. Then $V$ \emph{does not} necessarily stabilize $M$; however, the pair $(M,V)_U$ is a logarithmic $\sigma^{-1}$-F-isocrystal in the language of \cite{katz1979slope}. (Fortunately, Katz's entire paper is written in the context of $\sigma^a$-$F$-crystals for \emph{any} $a\neq 0$, not just the positive $a$. In particular, all of Katz's results also hold for $\sigma^{-1}$-$F$-crystals. Katz does not deal with logarithmic crystals, but we only use the slope bounds on the open set $U$.) By the coherence argument as above, we may find $\eta$ such that:
$$V\colon (Frob_{\underline Y}^{-1})^*M\rightarrow p^{-\eta}M$$
on all of $Y$. Again, using Katz's slope bounds on $U$ (which hold equally well for $\sigma^{-1}$-$F$-crystals) and the same coherence argument, one shows that after possibly increasing $\eta$, we in fact have
$$V^n\colon (Frob_{\underline Y}^{-n})^*M\rightarrow p^{-\eta}M$$
for all $n\geq 0$.
 Now run exactly the above argument with $V$ instead of $F$: then
 $$\displaystyle M':=\sum_{n\geq 0} V^n (Frob_{\underline Y}^{-1})^*M)\subset p^{-\eta} M$$
 will be coherent, horizontal, and stabilized by $V$. Recall that $FV=VF=p$; therefore $M'$ is also stabilized by $F$! Then $M'':=(M')^{**}$ is a reflexive logarithmic crystal on $(Y,Z)$ that is stabilized by both $F$ and $V$. Exactly as above, there exists an open subset $W\subset Y$ of complementary codimension at least 2 such that $(M'',F,V)_W$ is a logarithmic Dieudonn\'e crystal in finite, locally free modules, as desired. 

\end{proof}
\begin{rem}\label{rmk:hodge}
Let $(Y,Z)$ be a smooth pair over $k$ and let $(M,F,V)$ be a logarithmic Dieudonn\'e crystal (in finite, locally free modules) on $(Y,Z)$. We construct a natural line bundle $\omega$, which we call the \emph{Hodge line bundle}, attached to $(M,F,V)$.

Evaluating $M$ on the trivial thickening of $(Y,Z)$, we obtain a vector bundle $M_{(Y,Z)}$ on $Y$ together with an integrable connection with logarithmic poles on $Z$ and a horizontal map:

$$F_{(Y,Z)}\colon Frob^*_{\underline{Y}} M_{(Y,Z)} \rightarrow M_{(Y,Z)}.$$
The kernel is a vector bundle on $Y$. Set $\omega:=\det(\text{ker}(F_{(Y,Z)}))$. We call $\omega$ the \emph{Hodge line bundle} associated to $(M,F)$.

As a reference for this remark: in the case when $Z$ is empty, one finds this construction in \cite[2.5.2 and 2.5.5]{de1998homomorphisms}. In the setting of logarithmic Dieudonn\'e crystals, Kato-Trihan construct the dual object: $Lie(M,F,V)$, see \cite[5.1]{katotrihan} and especially Lemma 5.3 of \emph{loc. cit.} Note that this lemma holds in our situation: our hypothesis that $(Y,Z)$ is a smooth pair over a perfect field $k$ implies that the conditions of 5.1 on p. 563 of \emph{loc. cit.} hold: \'etale locally, there is a $p$-basis of $Y$ such that each (regular) component of $Z$ is cut out by some member of the $p$-basis. 
\end{rem}
\begin{rem}\label{rem:Hodge_compatible}
Let $Y/k$ be a smooth scheme over a perfect field $k$. Let $A_Y\rightarrow Y$ be an abelian scheme. Then there is an associated Dieudonn\'e crystal $(M,F,V)=\mathbb{D}(A_Y[p^{\infty}])$ on $Y$ \cite{berthelot1982theorie}. The Hodge bundle of $(M,F)$ is isomorphic to the Hodge line bundle of the abelian scheme $A_Y\rightarrow Y$ by \cite[3.3.5 and 4.3.10]{berthelot1982theorie}. 
\end{rem}

Finally, we have the following key Proposition \ref{prop:log_Hodge_compatible}, which furnishes several compatibilities that we need for our main argument. The proof of the proposition largely amounts to collating well-known results in the theory of $F$-(iso)crystals. We first require the following setup. 
\begin{setup}\label{setup:log_Hodge_compatible}Let $C/k$ be a smooth, proper, geometrically irreducible curve over a perfect field $k$ of characteristic $p>0$, let $U\subset C$ be an open dense subset, and let $Z\subset C$ be the reduced complement. Let $A_U\rightarrow U$ be an abelian scheme with semi-stable reduction along $Z$. Call the N\'eron model $A_C\rightarrow C$. Then there is an attached logarithmic Dieudonn\'e crystal on $(C,Z)$, which we call $\mathbb{D}^{\log}(A_C)$ \cite[4.4-4.8]{katotrihan}. (Kato-Trihan construct a covariant Dieudonn\'e functor. We assume ours is contravariant, which may be accomplished by taking a dual as in \cite[4.1]{katotrihan}.)
\end{setup}
\begin{prop}\label{prop:log_Hodge_compatible}
In the context of Setup \ref{setup:log_Hodge_compatible}, the following hold.
\begin{enumerate}
\item The (non-logarithmic) Dieudonn\'e crystal $\mathbb{D}^{\log}(A_C)|_U$ is isomorphic to the crystalline Dieudonn\'e module of the $p$-divisible group $A_U[p^{\infty}]$
\item Let $A^o_C\rightarrow C$ be the associated semi-abelian scheme to $A_C\rightarrow C$, obtained by removing the non-identity components of the fibers over $Z$. The Hodge line bundle of $A^o_C\rightarrow C$, is isomorphic to the Hodge line bundle of the logarithmic Dieudonn\'e crystal $\mathbb{D}^{\log}(A_C)$ described in Remark \ref{rmk:hodge}.
\item The logarithmic Dieudonn\'e crystal $\mathbb{D}^{\log}(A_C)$ is the \textbf{unique} logarithmic Dieudonn\'e crystal (with nilpotent residues) on $(C,Z)$ that extends $\mathbb{D}(A_U[p^{\infty}])$.
\end{enumerate}
\end{prop}
\begin{proof}
We prove each point in turn. For the first point, this follows by the construction of the logarithmic Dieudonn\'e module: see the description of gluing as in \cite[Lemma 4.4.1]{katotrihan}.

The second point is given in \cite[Example 5.4(b)]{katotrihan}, with the caveat that they work with the covariant Dieudonn\'e functor and $\text{Lie}(A_C\rightarrow C)$. 

We now prove the final point. First of all, note that we only need to check that there is at most one extension as a logarithmic $F$-crystal in finite, locally free modules: in our setting, $V$ is determined by $F$ under the relation $FV=VF=p$. By \cite[Th\'eor\`eme 7]{etesse}, it follows that $\mathbb{D}(A_U[p^{\infty}])\otimes \Qp$ is overconvergent. Forgeting the $V$-structure, we are left with a logarithmic $F$-crystal $(M,F)$ on $(C,Z)$. (By Remark \ref{rem:nilpotent}, the residues of $(M,F)$ are automatically nilpotent.) Note that $(M,F)|_U$ is an overconvergent $F$-crystal by \cite{kedlaya2001full}.



Now we are able to prove the desired uniqueness. Let $(N,F)$ be a logarithmic $F$-crystal on $(C,Z)$ such that $(M,F)|_U\cong (N,F)|_U$. (By the above, $(N,F)$ automatically has nilpotent residues along $Z$.) We introduce the following notation.
\begin{itemize}
\item $\fc{C,Z}$ is the category of logarithmic $F$-crystals in finite, locally free modules (with nilpotent residues) on $(C,Z)$.
\item $\fc{U}$ is the category of $F$-crystals in finite, locally free modules on $U$.
\item $\fisoc{C,Z}$ is the category of logarithmic $F$-isocrystals (with nilpotent residues) on $(C,Z)$.
\item $\fisoc{U}$ is the category of (convergent) $F$-isocrystals (with nilpotent residues) on $U$.
\end{itemize}
Consider the following diagram.

\begin{equation}\label{eqn:uniqueness}
\xymatrix{
\Hom_{\fc{C,Z}}\big((M,F),(N,F)\big)\ar[r]^{\text{res}}\ar[d]	& \Hom_{\fc{U}}\big((M,F)|_U,(N,F)|_U\big)\ar[d]\\
\Hom_{\fisoc{C,Z}}\big((M,F)\otimes \QQ,(N,F)\otimes \QQ\big)\ar[r]^{\text{res}_{\QQ}}	& \Hom_{\fisocd{U}}\big((M,F)|_U\otimes \QQ,(N,F)_{U}\otimes \QQ)
}\end{equation}
To prove that $(M,F)\cong (N,F)$ in the category $\fc{C,Z}$, it suffices to show that the top horizontal arrow is an isomorphism. We first prove that this arrow is injective with torsion cokernel; then we will show that the image is $p$-saturated.

The natural map

$$\Hom_{\textbf{Isoc}(C,Z)}\big( M\otimes \QQ,N\otimes \QQ\big)\rightarrow \Hom_{\textbf{Isoc}^{\dagger}(U)}\big( M|_U\otimes \QQ,N|_U\otimes \QQ\big)$$ is an isomorphism by a full-faithfulness result of Kedlaya \cite[Theorem 6.4.5]{kedlayasemistableI}. It follows immediately that the bottom horizontal arrow of Equation \ref{eqn:uniqueness} is an isomorphism. (See also \cite[Theorem 7.3]{kedlaya2016notes} for exactly this statement.)

The group $\Hom_{\fc{C,Z}}\big((M,F),(N,F)\big)$ is a finite free $\Zp$-module because $(C,Z)/k$ is log smooth. The left vertical arrow of Equation \ref{eqn:uniqueness} is injective because it is simply the map $\otimes \QQ$ on a finite free $\Zp$-module.

The group $\Hom_{\fc{U}}\big((M,F)|_U,(N,F)|_U\big)$ is \`a priori only a torsion-free $\Zp$-module. (In particular, it could have infinite rank.) However, the right vertical arrow fits into the following diagram:

$$\xymatrix{
\Hom_{\fc{U}}\big((M,F)|_U,(N,F)|_U\big)\ar[r]\ar[dr]_{\otimes \QQ}	&
\Hom_{\fisocd{U}}\big((M,F)|_U\otimes \QQ,(N,F)|_U\otimes \QQ\big)\ar[d]^{\cong}\\
&	\Hom_{\fisoc{U}}\big((M,F)|_U\otimes \QQ,(N,F)|_U\otimes\QQ\big)
}$$
where the vertical arrow is an isomorphism by \cite{kedlaya2001full}. As 
$$\Hom_{\fisocd{U}}\big((M,F)|_U\otimes \QQ,(N,F)|_U\otimes\QQ\big)$$
is a finite-dimensional $\Qp$-vector space, it follows that $\Hom_{\fc{U}}\big((M,F)|_U,(N,F)|_U\big)$ is a finite free $\Zp$-module. As the diagonal arrow is injective (it is the map $\otimes \QQ$), we deduce that:

$$\Hom_{\fc{U}}\big((M,F)|_U,(N,F)|_U\big)\rightarrow \Hom_{\fisocd{U}}\big((M,F)|_U\otimes \QQ,(N,F)|_U\otimes\QQ\big)$$
is also injective. Then the top arrow, $\text{res}$, in Diagram \ref{eqn:uniqueness} must be injective with torsion cokernel by consideration of the ranks. 

Finally, we prove that $\text{res}$ is a saturated map of finite free $\Zp$ modules. Equivalently, we prove that if $\varphi\in \Hom_{\fc{C,Z}}\big((M,F),(N,F)\big)$ is such that $\text{res}(\varphi)$ is divisible by $p$ in $\Hom_{\fc{U}}\big((M,F)|_U,(N,F)|_U\big)$, then $\varphi$ is divisible by $p$. This will use an explicit local calculation with Definition \ref{def:log_f_crystal}.

Pick smooth charts for $(C,Z)$. More precisely, $C$ may be covered by open subsets $Y_i$ such that there exists \'etale maps $\bar{f}_i\colon Y_i\rightarrow \mathbb{A}^1=\Spec(k[x])$ with the following property: if $Y_i\cap Z\neq \emptyset$, then $Y_i\cap Z=\bar{f}_i^*(V(x))$. As both the category of logarithmic $F$-crystals and usual $F$-crystals are stacks in the Zariski topology, it suffices to prove the desired saturatedness for a single $(Y_i,Y_i\cap Z)$, which we relable $(Y,Y\cap Z)$. If $Y\cap Z=\emptyset$, there is nothing to prove, so we may assume that $Y\cap Z\neq \emptyset$. 

Let $\bar{f}\colon Y\rightarrow \mathbb A^1$ be an \'etale map with $\bar{f}^*(V(x))=Y\cap Z$, which exists by the definition of $Y$. Let $A_0:=W(k)[x]^{\wedge}$ be the $p$-adic completion of $W(k)[x]$. By topological invariance of the \'etale site, the map $k[x]\rightarrow \mathcal O_Y(Y)$ (with $x\mapsto \bar f$) deforms to an \'etale map $A_0\rightarrow A$; set $f$ to be the image of $x$ in $A$. Similarly, let $B_0=W(k)[x,x^{-1}]^{\wedge}$ be the $p$-adic completion of $W(k)[x,x^{-1}]$. Again using topological invariance of the \'etale site, the map $k[x,x^{-1}]\rightarrow \mathcal{O}_Y(Y\setminus (Y\cap Z))$ induced from $\bar f$ deforms to an \'etale map  $B_0\rightarrow B$. As $Y\cap Z=\bar{f}^*(V(x))$, we have that $B\cong A\hat{\otimes}_{A_0} B_0$. In particular, there is the following diagram of $p$-adic rings.

$$\xymatrix{
A\ar@{^{(}->}[r]	& B\\
A_0\ar[u]\ar@{^{(}->}[r]	& B_0.\ar[u]
}
$$

\noindent As in Definition \ref{def:log_f_crystal}, equip $A_0$ and $B_0$ with the Frobenius lift $\sigma_0$ sending $t\mapsto t^p$. Set $\sigma$ to be the induced Frobenius lift on $A$ and $B$.

Let $(M,\nabla,F)$ and $(N,\nabla,F)$ be the realizations of $(M,F)$ and $(N,F)$ on $A$ as in Definition \ref{def:log_f_crystal}. In particular, $M$ and $N$ are finite, locally free $A$ modules. Then the statement we wish to prove is the following:

$$\Hom_A\big((M,\nabla,F),(N,\nabla,F)\big)\rightarrow \Hom_B\big((M,\nabla,F)_B,(N,\nabla,F)_B\big)$$
is a saturated map of $\Zp$ modules, i.e., if $\varphi\in \Hom_A\big((M,\nabla,F),(N,\nabla,F)\big)$ is a map such that $\varphi_B$ is divisible by $p$ in $\Hom_B\big((M,\nabla,F)_B,(N,\nabla,F)_B\big)$, then $\varphi$ was divisible by $p$. In particular, we assume that $\varphi(M_B)\subset pN_B$ and wish to prove that $\varphi(M)\subset pN$; indeed, if $\varphi(M)\subset pN$, then $\frac{\varphi}{p}$ will automatically commute with $\nabla$ and $F$ and hence would yield an element $\frac{\varphi}{p}\in\Hom_A\big((M,\nabla,F),(N,\nabla,F)\big)$. Therefore, it suffices to prove that if $M$ and $N$ are finite, locally free $A$ modules, then the map $\Hom_A(M,N)\rightarrow \Hom_B(M_B,N_B)$ is $p$-saturated.

We claim that $A\hookrightarrow B$ is a $p$-saturated map of $p$-adic rings. As noted above, $B\cong A\hat{\otimes}_{A_0} B_0$; therefore, to prove that $A\hookrightarrow B$ is a $p$-saturated, it suffices to prove that $A_0\hookrightarrow B_0$ is $p$-saturated. This map is simply the inclusion $W(k)[x]^{\wedge}\hookrightarrow W(k)[x,x^{-1}]^{\wedge}$, which is clearly $p$-saturated from the explicit description of the elements of the two rings as series.

 As $M$ and $N$ are finite locally free $A$-modules, the natural map $\Hom_A(M,N)\otimes_A B\rightarrow \Hom_B(M_B,N_B)$ is an isomorphism. It follows that the natural map:
$$\Hom_A(M,N)\rightarrow \Hom_A(M,N)\otimes_A B\cong \Hom_B(M_B,N_B)$$
is $p$-saturated, as desired.

\end{proof}

\begin{acknowledgement*}
This work was born at CIRM (in Luminy) at ``$p$-adic Analytic Geometry and Differential Equations''; the authors thank the organizers. R.K. warmly thanks Valery Alexeev, Philip Engel, Kiran Kedlaya, Daniel Litt, and especially Johan de Jong, with whom he had stimulating discussions on the topic of this article. In addition, the authors heartily thank the anonymous referee for a detailed, thorough, and helpful report. R.K. gratefully acknowledges financial support from the NSF under Grants No. DMS-1605825 and No. DMS-1344994 (RTG in Algebra, Algebraic Geometry and Number Theory at the University of Georgia).
\end{acknowledgement*}

\bibliographystyle{plain}
\bibliography{extending_abelian}

\end{document}